\newcommand{\pp}{\ensuremath{\mathbb{P}}}
\newcommand\be{\begin{equation}}
\newcommand\ee{\end{equation}}
\newcommand\nbea{\begin{eqnarray*}}
\newcommand\neea{\end{eqnarray*}}
\newcommand\bea{\begin{eqnarray}}
\newcommand\eea{\end{eqnarray}}
\newcommand\bi{\begin{itemize}}
\newcommand\ei{\end{itemize}}
\newcommand\ben{\begin{enumerate}}
\newcommand\een{\end{enumerate}}
\newcommand\bc{\begin{center}}
\newcommand\ec{\end{center}}
\newcommand\ba{\begin{array}}
\newcommand\ea{\end{array}}
\newcommand{\E}{\ensuremath{\mathbb{E}}}
\newcommand{\ce}{\mathcal{E}}
\newcommand{\foh}{\frac{1}{2}}  
\newtheorem{thm}{Theorem}[section]
\newtheorem{lem}[thm]{Lemma}
\newtheorem{defi}[thm]{Definition}
\newcommand{\ncr}[2]{{#1 \choose #2}}
\newcommand{\gep}{\epsilon}
\numberwithin{equation}{section}
\begin{document}

\title{On the number of summands in Zeckendorf decompositions}

\author{Murat Kolo$\breve{{\rm g}}$lu}\email{Murat.Kologlu@williams.edu}
\address{Department of Mathematics and Statistics, Williams College,
Williamstown, MA 01267}

\author{Gene Kopp}\email{gkopp@uchicago.edu}
\address{Department of Mathematics, University of Chicago, Chicago, IL 60637}

\author{Steven J. Miller}\email{Steven.J.Miller@williams.edu}
\address{Department of Mathematics and Statistics, Williams College,
Williamstown, MA 01267}

\author{Yinghui Wang}\email{yinghui@mit.edu}
\address{Department of Mathematics, MIT, Cambridge, MA 02139}

\subjclass[2010]{11B39 (primary) 65Q30, 60B10 (secondary)}

\keywords{Fibonacci numbers, Zeckendorf's Theorem, Lekkerkerker's
theorem, Central Limit Type Theorems}

\date{\today}

\thanks{The first, second and fourth named authors were partially supported by NSF Grants DMS0855257, DMS0970067, Williams College and the MIT Mathematics Department, and the second named author was partially supported by NSF grant DMS0850577. It is a pleasure to thank our colleagues from the Williams College 2010 SMALL REU program for many helpful conversations (especially the Diophantine Arithmetic and Analysis group of Ed Burger, David Clyde, Cory Colbert, Gea Shin and Nancy Wang), and Ed Scheinerman for useful comments on an earlier draft.}


\begin{abstract}
Zeckendorf proved that every positive integer has a unique representation as a sum of non-consecutive Fibonacci numbers. Once this has been shown, it's natural to ask how many summands are needed. Using a continued fraction approach, Lekkerkerker proved that the average number of such summands needed for integers in $[F_n, F_{n+1})$ is $n / (\varphi^2 + 1) + O(1)$,
where $\varphi = \frac{1+\sqrt{5}}2$ is the golden mean. Surprisingly, no one appears to have investigated the distribution of the number of summands; our main result is that this converges to a Gaussian as $n\to\infty$. Moreover, such a result holds not just for the Fibonacci numbers but many other problems, such as linear recurrence relation with non-negative integer coefficients (which is a generalization of base $B$ expansions of numbers) and far-difference representations.

In general the proofs involve adopting a combinatorial viewpoint and analyzing the resulting generating functions through partial fraction expansions and differentiating identities. The resulting arguments become quite technical; the purpose of this paper is to concentrate on the special and most interesting case of the Fibonacci numbers, where the obstructions vanish and the proofs follow from some combinatorics and Stirling's formula; see \cite{MW} for proofs in the general case. \end{abstract}

\maketitle

\tableofcontents



\section{Introduction}

The Fibonacci numbers are one of the most well known and studied sequences in mathematics, as well as one of the most enjoyable to play with. There are books (such as \cite{Kos}) and journals (such as the Fibonacci Quarterly) dedicated to all their wondrous properties. The purpose of this article is to review two nice results, namely Zeckendorf's and Lekkerkerker's Theorems, and discuss some massive generalizations.

Before stating our results, we first set some notation. We label the Fibonacci numbers by $F_1 = 1, F_2 = 2, F_3
= 3, F_4 = 5$ and in general $F_n = F_{n-1}+F_{n-2}$; we'll discuss shortly why it is convenient to use this non-standard counting. We let $\varphi$ denote the golden mean, $\frac{1+\sqrt{5}}2$, which satisfies $\varphi^2 = \varphi + 1$.

Zeckendorf (see for instance \cite{Ze}) proved that every positive integer can be written uniquely as a sum of non-adjacent Fibonacci numbers. The proof is a straightforward induction. Note, though, how important it is that our series begins with just a single 1; if we had two 1s then the decompositions of many numbers into non-adjacent summands would not be unique.

In the 1950s, Lekkerkerker \cite{Lek} answered the following question, which is a natural outgrowth of Zeckendorf's theorem: \emph{On average, how many summands are needed in the Zeckendorf decomposition?} Lekkerkerker proved that for integers in $[F_n, F_{n+1})$ the average number of summands, as $n\to\infty$, is $\frac{n}{\varphi^2+1}+O(1)$ $\approx$ $.276n$.

Of course, one can ask these questions for more general recurrence relations. Zeckendorf's result has been generalized to several recurrence relations (see the 1972 special volume on representations in the Fibonacci Quarterly, especially \cite{Ho,Ke}, as well as \cite{Len}). Burger \cite{Bu} proved the analogous result for the mean number of summands for a generalization of Fibonacci numbers, $G_{n} = G_{n-1} + G_{n-2} + \cdots + G_{n-L}$. There is, of course, another generalization, which interestingly does not seem to have been asked. Namely, \emph{how are the number of summands distributed about the mean for integers in $[F_n, F_{n+1})$.} This is a very natural question to ask. Both the question and the answer are reminiscent of the Erd\H{o}s-Kac Theorem \cite{EK}, which states that as $n\to\infty$ the number of distinct prime divisors of integers on the order of size $n$ tends to a Gaussian with mean $\log\log n$ and standard deviation $\sqrt{\log\log n}$.

Our main result is that a similar statement about Gaussian behavior holds, not just for the Fibonacci numbers, but for the large class of recurrence relations defined below.

\begin{defi}\label{defn:goodrecurrencereldef} We say a sequence $\{H_n\}_{n=1}^\infty$ of positive integers is a \textbf{Positive Linear Recurrence Sequence (PLRS)} if the following properties hold:

\ben

\item \emph{Recurrence relation:} There are non-negative integers $L, c_1, \dots, c_L$ such that $$H_{n+1} \ = \ c_1 H_n + \cdots + c_L H_{n+1-L},$$ with $L, c_1$ and $c_L$ positive.

\item \emph{Initial conditions:} $H_1 = 1$, and for $1 \le n < L$ we have
$$H_{n+1} \ =\
c_1 H_n + c_2 H_{n-1} + \cdots + c_n H_{1}+1.$$

\een

We call a decomposition $\sum_{i=1}^{m} {a_i H_{m+1-i}}$ of a positive integer $N$ (and the sequence $\{a_i\}_{i=1}^{m}$) \textbf{legal} if $a_1>0$, the other $a_i \ge 0$, and one of the following two conditions holds:

\bi

\item We have $m<L$ and $a_i=c_i$ for $1\le i\le m$.

\item There exists $s\in\{0,\dots, L\}$ such that
\begin{equation}\label{scon}
a_1\ =\ c_1,\ \ \ a_2\ =\ c_2,\ \ \ \cdots,\ \ \ a_{s-1}\ = \ c_{s-1}\ \ \ {\rm{and}}\  \ \ a_s\ < \ c_s,
\end{equation} $a_{s+1}, \dots, a_{s+\ell} = 0$ for some $\ell \ge 0$,
and $\{b_i\}_{i=1}^{m-s-\ell}$ (with $b_i = a_{s+\ell+i}$) is legal.
\ei

If $\sum_{i=1}^{m} {a_i H_{m+1-i}}$ is a legal decomposition of $N$, we define the \textbf{number of summands} (of this decomposition of $N$) to be $a_1 + \cdots + a_m$.
\end{defi}

Informally, a legal decomposition is one where we cannot use the recurrence relation to replace a linear combination of summands with another summand, and the coefficient of each summand is appropriately bounded. For example, if $H_{n+1} = 2 H_n + 3 H_{n-1} + H_{n-2}$, then $H_5 + 2 H_4 + 3 H_3 + H_1$ is legal, while $H_5 + 2 H_4 + 3 H_3 + H_2$ is not (we can replace $2 H_4 + 3 H_3 + H_2$ with $H_5$), nor is $7H_5 + 2H_2$ (as the coefficient of $H_5$ is too large). Note the Fibonacci numbers are just the special case of $L=2$ and $c_1 = c_2 = 1$.

We adopt a probabilistic language to state our main results.

\begin{defi}[Associated Probability Space to a Positive Linear Recurrence Sequence]\label{def:assocprobspace}
Let $\{H_n\}$ be a Positive Linear Recurrence Sequence. For each $n$, consider the discrete outcome space \be \Omega_n \ = \ \{H_n,\ H_n+1,\ H_n + 2,\  \cdots,\ H_{n+1}-1\} \ee with probability measure \be \pp_n(A) \ = \ \sum_{\omega \in A \atop \omega \in \Omega_n} \frac1{H_{n+1}-H_n}, \ \ \  a \subset \Omega_n; \ee in other words, each of the $H_{n+1}-H_n$ numbers is weighted equally. We define the random variable $K_n$ by setting $K_n(\omega)$ equal to the number of summands of $\omega \in \Omega_n$ in its legal decomposition. Implicit in this definition is that each integer has a unique legal decomposition; we will prove this fact, and thus $K_n$ is well-defined. It is also convenient to study $\mathcal{K}_n = K_n-1$; every integer in $\Omega_n$ must have at least on $H_n$ as a summand; thus $\mathcal{K}_n$ is the number of non-forced summands in a legal decomposition.
\end{defi}

Our main result is

\begin{thm}\label{thm:main} If $\{H_n\}_{n=1}^\infty$ is a \emph{Positive Linear Recurrence Sequence} then every positive integer may be written uniquely as a legal sum $\sum_i a_i H_i$ (see Definition \ref{defn:goodrecurrencereldef}). Let $K_n$ be the random variable of Definition \ref{def:assocprobspace}. Then $\E[K_n]$ and ${\rm Var}(K_n)$ are of order $n$, and as $n\to\infty$, $K_n$ (and $\mathcal{K}_n)$ converges to a Gaussian.
\end{thm}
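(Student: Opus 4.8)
The plan is to prove the three assertions—uniqueness of legal decompositions, linear growth of $\E[K_n]$ and ${\rm Var}(K_n)$, and asymptotic Gaussianity—for an arbitrary \emph{Positive Linear Recurrence Sequence} through a single combinatorial generating function that simultaneously tracks the leading index $n$ and the number of summands.

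\textbf{Uniqueness.} First I would establish that every integer in $[H_n,H_{n+1})$ has exactly one legal decomposition, so that $K_n$ is well defined. Let $L_n$ denote the number of legal sequences $(a_1,\dots,a_m)$ whose leading summand is $H_n$. Using the recurrence together with the specific initial conditions of Definition \ref{defn:goodrecurrencereldef}, an induction on $n$—summing over the location $s$ of the first strict drop in \eqref{scon} and over the length $\ell$ of the ensuing zero block—gives $L_n = H_{n+1}-H_n = |\Omega_n|$. Separately, because the value of any legal tail of leading index $<n$ is strictly less than $H_n$, distinct legal sequences represent distinct integers, all lying in $[H_n,H_{n+1})$. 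An injection between finite sets of equal cardinality is a bijection, giving simultaneous existence and uniqueness.

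\textbf{Generating function.} Next I would introduce the summand-counting polynomials
\[
p_n(y) \ = \ \sum_{\omega\in\Omega_n} y^{K_n(\omega)}, \qquad p_n(1) = |\Omega_n| = H_{n+1}-H_n,
\]
so that $\E[y^{K_n}] = p_n(y)/p_n(1)$. Decomposing a legal string according to \eqref{scon}—an initial run matching $c_1,\dots,c_{s-1}$, a strict drop $a_s<c_s$, a block of $\ell\ge 0$ zeros, and then a legal tail of leading index $n-s-\ell$—translates into a linear recurrence for $p_n(y)$ whose coefficients are explicit polynomials in $y$ built from the partial sums $c_1+\cdots+c_{s-1}$ and the factors $1+y+\cdots+y^{c_s-1}$. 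Packaging these into $F(x,y) = \sum_{n\ge 1} p_n(y)\, x^n$ yields a rational function $F(x,y) = A(x,y)/B(x,y)$, where $B(x,y)$ has degree $L$ in $x$ and reduces, at $y=1$, to the reciprocal characteristic polynomial of the $H$-recurrence.

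\textbf{Asymptotics.} I would then read off the behavior of $p_n(y) = [x^n]F(x,y)$ by a partial fraction expansion in $x$: its poles are the roots $x_j(y)$ of $B(x,y)$, and $p_n(y) = \sum_j C_j(y)\, x_j(y)^{-n} + \cdots$. Since the companion/transfer matrix has nonnegative entries and is primitive (because $c_1,c_L>0$), Perron–Frobenius gives $B(x,1)$ a unique root of smallest modulus, the reciprocal of the dominant root $\lambda$ of the recurrence; hence for $y$ near $1$ there is a simple dominant root $x_1(y)$ depending analytically on $y$, giving
\[
p_n(y) \ = \ C(y)\,\rho(y)^{n}\bigl(1+o(1)\bigr), \qquad \rho(y) := 1/x_1(y),
\]
with $\rho$ analytic, positive near $y=1$, and $\rho(1)=\lambda$. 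Setting $\Lambda(t) = \log\rho(e^t)$, we get $\log\E[e^{tK_n}] = n\bigl(\Lambda(t)-\Lambda(0)\bigr) + O(1)$ uniformly for small $t$, whence $\E[K_n] = n\Lambda'(0)+O(1)$ and ${\rm Var}(K_n) = n\Lambda''(0)+O(1)$, both of order $n$ once $\Lambda'(0)>0$ and $\Lambda''(0)>0$ are checked.

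\textbf{Gaussianity and the main obstacle.} Finally, for $W_n = (K_n - n\Lambda'(0))/\sqrt{n\,\Lambda''(0)}$ I would substitute $t = s/(\sigma\sqrt n)$ with $\sigma^2 = \Lambda''(0)$ into the uniform estimate and Taylor-expand $\Lambda$ to second order; the $\sqrt n$ terms cancel and $\E[e^{sW_n}]\to e^{s^2/2}$, so by Lévy continuity $W_n$ converges in distribution to the standard Gaussian (this is exactly Hwang's quasi-power framework), and the same holds for $\mathcal{K}_n = K_n-1$ since shifting by a constant leaves the limit unchanged. The hard part is not this final limit but the two structural facts feeding it in \emph{full PLRS generality}: deriving the precise rational form of $F$ from the intricate $s$/$\ell$-zero bookkeeping of \eqref{scon}, and proving that $x_1(y)$ remains simple, isolated and analytic while the variance stays strictly positive, $\Lambda''(0)>0$. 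These non-degeneracy statements are precisely the \emph{obstructions} that the introduction notes vanish for the Fibonacci case; ruling out a degenerate (non-Gaussian) limit by showing $B(x,y)$ does not factor pathologically and that $\rho$ is genuinely strictly log-convex at $y=1$ is where the general argument becomes technical, and is carried out in \cite{MW}.
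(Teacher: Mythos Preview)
Your outline is sound and is essentially the route of \cite{MW}: encode legal decompositions in a bivariate generating function $F(x,y)=A(x,y)/B(x,y)$, use Perron--Frobenius (primitivity of the companion matrix from $c_1,c_L>0$) to isolate a simple dominant root $x_1(y)$, and then run a quasi-power/Hwang argument on $\log\rho(e^t)$ to extract linear mean and variance and a Gaussian limit. This is a genuinely different route from the one the present paper actually carries out. Here the paper does \emph{not} prove the general PLRS statement; it specializes to the Fibonacci case, where the combinatorics collapse to the single closed form $p_n(k)=\binom{n-1-k}{k}/F_{n-1}$, and then obtains the Gaussian directly by applying Stirling's formula to this binomial density (Lemmas \ref{lem:thmmainnnk} and \ref{lem:thmmainsnk}), with the mean and variance read off from the saddle rather than from $\Lambda'(0),\Lambda''(0)$. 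What your approach buys is generality and a clean conceptual explanation of why a central limit theorem should hold for any PLRS; what the paper's approach buys is an elementary, fully explicit proof in the Fibonacci case that avoids the analytic non-degeneracy checks you correctly flag as the obstruction (simplicity and analyticity of $x_1(y)$, and $\Lambda''(0)>0$). One minor point: you invoke L\'evy continuity but work with the real moment generating function; either switch to $t=is$ or cite the MGF convergence theorem instead.
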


Our result first extends Zeckendorf's and Lekkerkerker's theorems to a large class of recurrence relations, and then goes further and yields the distribution of summands tends to a Gaussian. The proof has three main ingredients. The first is to adopt a combinatorial point of view. Previous approaches to Lekkerkerker's theorem were number theoretic, involving continued fractions. We instead view this as a combinatorial problem, namely how many ways can we choose elements in a set subject to some restrictions on what may be taken. This approach gives us an explicit formula for the number of $N$ in $[H_n, H_{n+1})$ that have a given number of summands. For each interval $[H_n, H_{n+1})$ (i.e., for each $n$) we may thus associate a probability function $p_n$ on $[H_n, H_{n+1})$ where $p_n(k)$ is the probability of having exactly $k+1$ summands;\footnote{As remarked earlier, we choose to write the density this way as every integer in $[H_n, H_{n+1})$ must have $H_n$ in its decomposition, and thus it is more natural to study the number of additional, non-forced summands needed.} note $p_n$ is the density of the random variable $\mathcal{K}_n$. We then use generating functions and differentiating identities to obtain tractable formulas for these summand functions $p_n$, and prove our claims about the mean and the variance. We conclude by showing that as $n\to\infty$ the centered and normalized moments of $p_n$ tend to the moments of the standard normal; by Markov's Method of Moments this yields the Gaussian behavior.\\

We can gain a lot of intuition as to why these results are true by looking at the special case of $L=1$ and $c_1 = B > 0$. For these choices, our sequence is just $H_n = B^{n-1}$; in other words, we are looking at the base $B$ decomposition of integers. Zeckendorf's theorem is now clearly true, as every number has a unique representation. Lekkerkerker and the Gaussian behavior are now just consequences of the Central Limit Theorem. For example, consider a decomposition of an $N \in [B^n, B^{n+1})$: $$N \ = \ a_1 B^n + a_2 B^{n-1} + \cdots + a_{n+1} B^0.$$ We have $a_1 \in \{1,\dots, B-1\}$ and all other $a_i \in \{0,\dots,B-1\}$. We are interested in the behavior, for large $n$, of $a_1 + \cdots + a_{n+1}$ as we vary over $N$ in $[B^n, B^{n+1})$. Note for large $n$ the contribution of $a_1$ is immaterial, and the remaining $a_i$'s can be understood by considering the sum of $n$ independent, identically distributed uniform random variables on $\{0, \dots, B-1\}$ (which have mean $\frac{B-1}2$ and standard deviation $\sqrt{(B^2-1)/12}$). Denoting these by $A_i$, by the Central Limit Theorem $A_2 + \cdots + A_{n+1}$ converges to being normally distributed with mean $\frac{B-1}2 n$ and standard deviation $n\sqrt{(B^2-1)/12}$. \\

Our approach is quite general, and can handle a variety of related problems. We state just one more, which allows us to see some very interesting behavior.

Recently Alpert \cite{Al} showed that every positive integer can be written uniquely as a sum and difference of the Fibonacci numbers  $\{F_n\}_{n=1}^\infty$ such that every two terms of the same sign differ in index by at least 4, and every two terms of opposite sign differ in index by at least 3; we call this the \textbf{far-difference representation}. For example, $$2011 \ = \ F_{17} - F_{14} + F_8 + F_3 \ \ \ {\rm and} \ \ \ 1900 \ = \ F_{17} - F_{14} - F_{10} + F_6 + F_2.$$ If $S_n=\sum_{0<n-4i\le n} F_{n-4i} = F_n+F_{n-4}+\cdots$  for positive $n$ and 0 otherwise, then for each $N\in (S_{n-1},S_n]$ the first term in its far-difference representation is $F_n$. Note that 0 has the empty representation. We can show

\begin{thm}\label{thm:hannah} Consider the outcome space  $\Omega_n = \{S_{n-1}+1, S_{n-1}+2, \ \dots, \ S_n\}$ with probability measure $\pp_(A) = \sum_{\omega \in A} \frac1{S_n-S_{n-1}}$ for $A \subset \Omega_n$. Let $K_n$ and $L_n$ be the random variables denoting the number of positive and negative Fibonacci summands in the far-difference representation (they are well-defined by \cite{Al}). As $n\to\infty$, for any real numbers $a$ and $b$ the random variable $aK_n+bL_n$ converges to a Gaussian. The expected value of $K_n$ (which is $\phi/2$ more than that of $L_n$) is $\frac{n}{10} + \frac{371-113\sqrt{5}}{40} + o(1)$; the variance of both is of size $\frac{15+21\sqrt{5}}{1000}n$. $K_n$ and $L_n$ are negatively correlated, with a correlation coefficient of $-(21-2\varphi)/(29+2\varphi)$. Further, $K_n+L_n$ and $K_n-L_n$ are independent random variables as $n\to\infty$, which implies the total number of Fibonacci numbers is independent of the excess of positive to negative summands.
\end{thm}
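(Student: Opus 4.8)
The plan is to reduce the far-difference representation to a pair of coupled PLRS-type problems and then run the same combinatorics–plus–generating-functions machinery used for Theorem \ref{thm:main}. First I would recall Alpert's bijection: a far-difference representation with leading term $F_n$ is built by choosing a decreasing sequence of ``sign-change'' indices, alternating $+$ and $-$, with the spacing constraints (gap $\ge 4$ within a block of one sign, gap $\ge 3$ across a sign change). The key structural observation is that between consecutive sign changes the chosen indices behave exactly like a Zeckendorf-legal (gap $\ge 2$, after rescaling by the $S_n = F_n + F_{n-4} + \cdots$ recurrence, which satisfies $S_n = S_{n-1} + S_{n-4} + \text{(lower order)}$ — more precisely one shows $S_n$ itself satisfies a fixed linear recurrence of the PLRS type). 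So I would first establish the uniqueness of the representation and a clean recursive description of $\Omega_n$, splitting $\{S_{n-1}+1,\dots,S_n\}$ according to the index $j$ of the second-largest-sign-block start (or the absence thereof). This yields a two-variable generating function
\[
F(x,y,t) \ = \ \sum_{n} \left(\sum_{\omega \in \Omega_n} x^{K_n(\omega)} y^{L_n(\omega)}\right) t^n,
\]
which, by the block decomposition, satisfies a functional equation making it a \emph{rational} function of $t$ whose denominator is (a power of) a fixed polynomial $1 - g(t)$ independent of $x,y$, with numerator polynomial in $x,y,t$.

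Next I would extract asymptotics. Writing $F(x,y,t) = N(x,y,t)/D(t)$ (or a small product of such), a partial-fraction / dominant-pole analysis in $t$ gives, for fixed $(x,y)$ near $(1,1)$,
\[
\sum_{\omega \in \Omega_n} x^{K_n(\omega)} y^{L_n(\omega)} \ = \ C(x,y)\, \rho(x,y)^{-n}\,(1 + o(1)),
\]
where $\rho(x,y)$ is the smallest root of the denominator and $\rho(1,1)^{-1} = S_n/S_{n-1} \to \varphi^4$ governs the count $|\Omega_n| = S_n - S_{n-1}$. Dividing by $|\Omega_n|$ gives the joint moment generating function of $(K_n, L_n)$, and standard smoothness of $\rho$ (it is an algebraic function, analytic near $(1,1)$ since the dominant root is simple and isolated) lets me read off: $\E[K_n], \E[L_n], \mathrm{Var}(K_n), \mathrm{Var}(L_n), \mathrm{Cov}(K_n,L_n)$ are all linear in $n$ with computable leading coefficients, from the gradient and Hessian of $\log \rho$ at $(1,1)$. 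The stated constants $n/10$, variance $\tfrac{15+21\sqrt5}{1000}n$, correlation $-(21-2\varphi)/(29+2\varphi)$, and the $\varphi/2$ shift between $\E[K_n]$ and $\E[L_n]$ would drop out of this computation; the mean shift can also be seen directly from the asymmetry that the leading block is always positive. For the Gaussian limit of $aK_n + bL_n$, I would apply the standard quasi-power / Hwang-type theorem (or directly Markov's method of moments, as the paper does for Theorem \ref{thm:main}): set $(x,y) = (e^{as/\sigma_n}, e^{bs/\sigma_n})$ and show the cumulant generating function of $(aK_n+bL_n - \mu_n)/\sigma_n$ converges to $s^2/2$, using that $\log\rho$ is analytic. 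Since this works for \emph{every} $(a,b)$, the pair $(K_n, L_n)$ is asymptotically jointly Gaussian; in particular $K_n + L_n$ and $K_n - L_n$ are each asymptotically Gaussian, and two jointly Gaussian variables are independent iff uncorrelated, so it remains only to check $\mathrm{Cov}(K_n + L_n, K_n - L_n) = \mathrm{Var}(K_n) - \mathrm{Var}(L_n) = o(n)$; in fact Alpert's symmetry between positive and negative blocks (they differ only through the forced leading positive term) forces $\mathrm{Var}(K_n)$ and $\mathrm{Var}(L_n)$ to have the same leading coefficient, giving the asymptotic independence of the total count from the signed excess.

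The main obstacle, I expect, is purely bookkeeping rather than conceptual: setting up the block decomposition of the far-difference representation carefully enough to get the exact functional equation for $F(x,y,t)$, handling the boundary effects at the top block (which is forced to be positive and has a slightly different constraint than interior blocks), and then carrying the exact algebraic constants through the Hessian-of-$\log\rho$ computation to land on the precise values $\tfrac{371 - 113\sqrt5}{40}$, $\tfrac{15+21\sqrt5}{1000}$, and $-(21-2\varphi)/(29+2\varphi)$. Each of these is a finite, if tedious, computation with the characteristic polynomial of the $S_n$ recurrence (whose dominant root is $\varphi^4$); no new ideas beyond those in the proof of Theorem \ref{thm:main} are needed, only the two-dimensional refinement $(K_n, L_n)$ in place of the single $K_n$, plus the elementary fact that uncorrelated jointly Gaussian variables are independent. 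Full details appear in \cite{MW}.
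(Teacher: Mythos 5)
Your proposal takes a genuinely different route from what this paper actually presents. The paper's treatment of Theorem \ref{thm:hannah} in Section 5 is deliberately partial: it pushes the stars-and-bars combinatorics of Lemma \ref{lem:cookieproblem} through the far-difference constraints to obtain an explicit closed form for the joint density $p_n(k,\ell)$ (a sum over $r$ of products of binomial coefficients), and then concedes that extracting asymptotics from that formula ``does not seem feasible,'' deferring the proof of the Gaussian behavior, the moment constants, and the independence statement to the generating-function analysis of \cite{MW}. You skip the explicit density entirely and go straight to the bivariate generating function $F(x,y,t)$, a dominant-pole expansion, and a quasi-powers/method-of-moments argument; that is essentially the strategy of the companion paper that this note cites for the complete proof. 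What the paper's route buys is an exact finite formula for $p_n(k,\ell)$, usable for any particular joint moment; what your route buys is an actual path to the limit theorem, including the clean Cram\'er--Wold plus ``uncorrelated jointly Gaussian implies independent'' argument for the asymptotic independence of $K_n+L_n$ and $K_n-L_n$, which is valid precisely because ${\rm Var}(K_n)$ and ${\rm Var}(L_n)$ share the leading coefficient $\frac{15+21\sqrt{5}}{1000}$ while each is of order $n$.

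Two concrete slips should be fixed before the computation is carried out. First, $S_n = F_n+F_{n-4}+F_{n-8}+\cdots$ satisfies $S_n = S_{n-1}+S_{n-2}$ (pair the terms of $S_{n-1}$ and $S_{n-2}$ and use $F_{m-1}+F_{m-2}=F_m$), not $S_n = S_{n-1}+S_{n-4}+(\text{lower order})$; the identity that follows directly from the definition is $S_n = F_n + S_{n-4}$. Second, and consequently, $S_n/S_{n-1}\to\varphi$, not $\varphi^4$: since $F_m \sim \frac{\varphi}{\sqrt{5}}\varphi^m$ one has $S_n \sim \frac{\varphi}{\sqrt{5}}\cdot\frac{\varphi^n}{1-\varphi^{-4}}$, so the dominant pole of $F(1,1,t)$ sits at $t=\varphi^{-1}$ and $|\Omega_n|=S_n-S_{n-1}$ grows like $\varphi^n$. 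Feeding $\varphi^4$ into the gradient and Hessian of $\log\rho$ would not produce $\frac{n}{10}$, $\frac{15+21\sqrt{5}}{1000}n$, or $-(21-2\varphi)/(29+2\varphi)$. With those corrections your outline is sound, though, like the paper itself, it defers the decisive bookkeeping (the exact functional equation for $F(x,y,t)$ and the Hessian evaluation) to \cite{MW}.
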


Unfortunately, our arguments become involved and technical to handle all of the cases in Theorems \ref{thm:main} and \ref{thm:hannah}. In order to highlight the ideas without getting bogged down in computations, in this note we concentrate on the most important special case of Theorem \ref{thm:main}, namely the Fibonacci numbers, and provided a sketch of some of the arguments in Theorem \ref{thm:hannah}. We first describe our combinatorial perspective, which yields \be\label{eq:formulapnkzeck}p_n(k)\ =\ \frac{\ncr{n-1-k}{k}}{F_{n-1}} \ee (remember $p_n(k)$ is the probability that an $N \in [F_n, F_{n+1})$ has exactly $k+1$ summands in its Zeckendorf decomposition). All of our theorems follow from knowing this density. The difficulties in the general case are due to the fact that the corresponding formulas for the densities are far more involved; here we can easily determine the behavior by applying Stirling's formula.

We first use our explicit formula for $p_n(k)$ to prove Zeckendorf's theorem, and then sketch how we can use it to prove Lekkerkerker's theorem (the mean $\mu_n$) as well as compute the variance $\sigma_n^2$. We then show that as $n\to\infty$, $p_n(k)$ converges to a Gaussian with mean $\mu_n$ and variance $\sigma_n^2$. While technically we could just immediately jump to the limiting behavior of the density, it would be very unmotivated not knowing the mean and the variance and either choosing the correct values by divine inspiration, or having them fall out of the resulting algebra.


\section{Combinatorial Perspective and Zeckendorf's Theorem}

The key input in our analysis is counting the number of solutions to a well known Diophantine equation.\footnote{This problem is also known as the stars and bars problem, the cookie problem, or the simplest case of Waring's problem.}

\begin{lem}\label{lem:cookieproblem}
\begin{enumerate}\ \\
\item The number of
ways of dividing $n$ identical objects among $p$ distinct people is
$\ncr{n+p-1}{p-1}$. Equivalently, this is the number of solutions to
$y_1 + \cdots + y_p = n$ with each $y_i$ a non-negative integer.
\item More generally, the number of solutions to $y_1 + \cdots + y_p = n$ with $y_i \ge c_i$ (each $c_i$ a non-negative integer) is $\ncr{n-(c_1+\cdots+c_p) + p-1}{p-1}$.
\end{enumerate}
\end{lem}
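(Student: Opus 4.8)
The plan is to establish part (1) by the classical \emph{stars and bars} bijection and then obtain part (2) from it by a shift of variables.

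For part (1), I would exhibit a bijection between the non-negative integer solutions $(y_1,\dots,y_p)$ of $y_1+\cdots+y_p=n$ and the $(p-1)$-element subsets of $\{1,\dots,n+p-1\}$. Send a solution to its set of partial sums $\{y_1+1,\ y_1+y_2+2,\ \dots,\ y_1+\cdots+y_{p-1}+(p-1)\}$, which is a strictly increasing sequence lying inside $\{1,\dots,n+p-1\}$; conversely, from $\{s_1<\cdots<s_{p-1}\}$ recover $y_i=s_i-s_{i-1}-1$ with the conventions $s_0=0$ and $s_p=n+p$. One checks the two maps are mutually inverse, so the number of solutions equals $\binom{n+p-1}{p-1}$. (Pictorially: lay down $n$ identical objects and insert $p-1$ dividers among the $n+p-1$ available slots, the block of objects between consecutive dividers being a given person's share.)

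For part (2), substitute $z_i=y_i-c_i$. Since the $c_i$ are fixed non-negative integers, $(y_i)\mapsto(z_i)$ is a bijection from the solutions of $y_1+\cdots+y_p=n$ with $y_i\ge c_i$ onto the non-negative integer solutions of $z_1+\cdots+z_p=n-(c_1+\cdots+c_p)$. Applying part (1) gives $\binom{n-(c_1+\cdots+c_p)+p-1}{p-1}$, as claimed; when $n<c_1+\cdots+c_p$ both the solution count and the binomial coefficient are $0$ under the usual convention that $\binom{m}{k}=0$ for $m<0$.

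I expect no real obstacle here: the only point requiring a little care is verifying that the two maps in part (1) are genuinely inverse to one another, together with fixing the boundary conventions so that the degenerate cases are consistent. This lemma is the workhorse for the later count of legal decompositions with a prescribed number of summands, so it is worth stating and proving it cleanly up front.
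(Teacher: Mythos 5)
Your proposal is correct and follows essentially the same route as the paper: part (1) is the standard stars-and-bars bijection (the paper phrases it as appending $p-1$ extra items and choosing which $p-1$ of the $n+p-1$ objects are dividers, which is the same correspondence you describe via partial sums), and part (2) is the identical shift $y_i = x_i + c_i$ reducing to part (1). No differences worth noting.
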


\begin{proof} For (1): The two formulations are clearly equivalent; simply
interpret $y_i$ as the number of objects person $i$ receives. To
prove the claimed formula, imagine the $m$ objects are in a row and
we add $p-1$ items at the end. We now have $n+p-1$ objects. There is a one-to-one correspondence between assigning the $n$ objects to the $p$ people and choosing $p-1$ of $n+p-1$ objects.
There are $\ncr{n+p-1}{p-1}$ ways to choose $p-1$ of the $n+p-1$ items. All the
items up to the first one chosen go to person 1, then the items
up to the second one chosen go to person 2, and so on.

For (2): We may write $y_i = x_i+c_i$ with each $x_i \ge 0$ a non-negative integer. Our problem is equivalent to $$(x_1+c_1) + \cdots + (x_p + c_p) \ = \ n, $$ which becomes $$x_1 + \cdots + x_p \ = \ n - (c_1 + \cdots + c_p), \ \ \ x_i \ge 0,$$ whose solution is given by part (1).  \end{proof}

There are two parts to Zeckendorf's Theorem: not only does a
decomposition exist of any positive integer as a sum of
non-consecutive Fibonacci numbers, but such a decomposition is
unique. As our combinatorial approach does require this uniqueness as an
input, we provide the standard proof below.

\begin{lem}[Zeckendorf's Theorem - Uniqueness of
Decomposition]\label{lem:zeckunique} If two sums of non-consecutive
Fibonacci numbers are equal, then the two sums have the same summands.
\end{lem}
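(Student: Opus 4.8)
The plan is to argue by strong induction on the larger of the two sums, using the key growth fact $F_{m+1} < 2F_m$ together with the "greedy" property that $F_m$ is the largest Fibonacci number not exceeding any integer $N$ in $[F_m, F_{m+1})$. First I would set up the induction: suppose $\sum_{i \in S} F_i = \sum_{j \in T} F_j$ are two sums of non-consecutive Fibonacci numbers with $S \ne T$, chosen so that the common value $N$ is minimal among all such counterexamples. Without loss of generality I would remove any indices common to $S$ and $T$ from both sides; this does not disturb the non-consecutiveness and does not change equality, and by minimality we may assume $S \cap T = \emptyset$ (if the resulting sums are still a counterexample; if they become empty on both sides there is nothing to prove, and if one side becomes empty the other must be empty too since Fibonacci numbers are positive).

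Next I would show the largest summand on each side must coincide, contradicting $S \cap T = \emptyset$. Let $F_a = \max_{i \in S} F_i$ and $F_b = \max_{j \in T} F_j$, and suppose without loss of generality $a \le b$; I want to force $a = b$. The crucial estimate is that a sum of non-consecutive Fibonacci numbers whose largest term is $F_a$ is strictly less than $F_{a+1}$: indeed $F_a + F_{a-2} + F_{a-4} + \cdots < F_{a+1}$, which follows from the telescoping-type identity $F_{a-2} + F_{a-4} + \cdots + F_{\varepsilon} = F_{a-1} - 1$ (with $\varepsilon \in \{1,2\}$), proved by a short separate induction using $F_{k} - F_{k-2} = F_{k-1}$. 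Hence $N = \sum_{i \in S} F_i < F_{a+1} \le F_{b+1}$, so on the $T$-side we get $F_b \le N < F_{b+1}$, i.e. $N \in [F_b, F_{b+1})$; but the same bound applied to the $S$-side together with $F_a \le N$ gives $N \in [F_a, F_{a+1})$. Since consecutive intervals $[F_k, F_{k+1})$ are disjoint, $a = b$. Then $F_a = F_b$ is a common summand, contradicting $S \cap T = \emptyset$. This completes the induction and hence the proof.

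I expect the main obstacle to be the careful bookkeeping at the base of the induction and in the "remove common summands" step — one must make sure that deleting shared indices really does preserve the non-consecutive condition (it does, trivially, since we only delete) and that the reduced instance is still a genuine counterexample of smaller value, or else terminates cleanly. The one genuinely substantive ingredient is the inequality $F_a + F_{a-2} + F_{a-4} + \cdots < F_{a+1}$, i.e. that no legal (non-consecutive) sum with top term $F_a$ can reach $F_{a+1}$; everything else is formal. An alternative I would keep in mind is the direct greedy argument: strong induction on $N$, writing $N \in [F_a, F_{a+1})$, noting $0 \le N - F_a < F_{a+1} - F_a = F_{a-1} < F_a$ so $F_a$ cannot repeat and any decomposition of $N$ must use $F_a$ (since without it the total is $< F_a \le N$), then applying the inductive hypothesis to $N - F_a$; this is essentially the same proof organized around $N$ rather than around a minimal counterexample, and is probably the cleaner write-up.
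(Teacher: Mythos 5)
Your proposal is correct and rests on the same key estimate as the paper's proof: after cancelling common summands, a sum of non-consecutive Fibonacci numbers with largest term $F_a$ is strictly less than $F_{a+1}$ (the paper phrases this as ``adding 1 to the decomposition yields at most $F_{m_1+1}$''), which forces the largest summands on the two sides to coincide and gives the contradiction. Your minimal-counterexample packaging and the explicit identity $F_{a}+F_{a-2}+\cdots+F_{\varepsilon}=F_{a+1}-1$ are just a more detailed write-up of the same argument, so there is nothing substantively different to flag.
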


\begin{proof} Assume \be\label{eq:twodecompN}  F_{n_1} + F_{n_2} + \cdots + F_{n_k} \ = \ F_{m_1} + F_{m_2} +
\cdots + F_{m_\ell}\ee where $n_1 \ge n_2 \ge \cdots$ and $m_1 \ge m_2 \ge \cdots$. Without loss of generality we may assume $F_{n_1} > F_{m_1}$ (as otherwise we would just remove some summands). As each decomposition is of non-adjacent summands, if we add 1 to the decomposition on the right of \eqref{eq:twodecompN}, the largest it can be\footnote{For example, $F_6+F_4+F_2+1$ $=$ $F_6+F_4+F_3$ $=$ $F_6+F_5$ $=$ $F_7$.} is $F_{m_1} + F_{m_2+1}$, which itself is at most $F_{m_1+1}$. As $F_{n_1} > F_{m_1}$, we see that adding 1 to the right hand side of \eqref{eq:twodecompN} yields a number at most $F_{n_1}$; thus \be F_{n_1} + F_{n_2} + \cdots + F_{n_k} \ > \ F_{m_1} + F_{m_2} +
\cdots + F_{m_\ell}, \ee contradiction. \end{proof}


We are now in a position to prove the first part of Zeckendorf's Theorem. In the course of our proof we will derive \eqref{eq:formulapnkzeck}, the claimed formula for $p_n(k)$.

\begin{thm}[Zeckendorf's Theorem - Existence of
Decomposition]\label{thm:zecknonconsec} Any natural number can be
expressed as a sum of non-consecutive Fibonacci numbers. Further, the probability an $N \in [F_n, F_{n+1})$ has exactly $k+1$ summands in its Zeckendorf decomposition is $\ncr{n-1-k}{k}/F_{n-1}$ (in other words, the density function of the random variable $\mathcal{K}_n$ is $p_n(k) = \ncr{n-1-k}{k}/F_{n-1}$.
\end{thm}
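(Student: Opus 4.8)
The plan is to establish both halves of the theorem simultaneously by a direct counting argument, deferring uniqueness to Lemma~\ref{lem:zeckunique} (already proved). First I would fix $n$ and count, for each $k\ge 0$, the number of integers $N\in[F_n,F_{n+1})$ whose Zeckendorf decomposition uses exactly $k+1$ non-consecutive Fibonacci numbers, the largest of which must be $F_n$ (since $N\ge F_n$ and, were $F_n$ absent, the largest possible non-consecutive sum below $F_n$ is $F_{n-1}+F_{n-3}+\cdots<F_n$). So such an $N$ has the form $F_n+F_{j_1}+\cdots+F_{j_k}$ with $n>j_1>j_2>\cdots>j_k\ge 1$ and consecutive indices forbidden, i.e. $j_i-j_{i+1}\ge 2$ for all $i$ and $n-j_1\ge 2$.

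The key step is to convert this ``no two consecutive indices'' constraint into a gaps/cookie problem so that Lemma~\ref{lem:cookieproblem} applies. Writing $j_{k+1}=0$ by convention and introducing the gap variables $d_0=n-j_1$, $d_i=j_i-j_{i+1}$ for $1\le i\le k-1$, and $d_k=j_k-1\ge 0$, one has $d_0+d_1+\cdots+d_k=n-1$ with $d_0,\dots,d_{k-1}\ge 2$ and $d_k\ge 0$ (the asymmetry arising because the smallest index may be as low as $1$ but the largest is capped below $n$). By part (2) of Lemma~\ref{lem:cookieproblem}, the number of such solutions in $k+1$ variables is $\binom{(n-1)-2k+(k+1)-1}{(k+1)-1}=\binom{n-1-k}{k}$. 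This is exactly the count of $N\in[F_n,F_{n+1})$ with $k+1$ summands.

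Summing over $k$ then proves existence of a decomposition for every $N$ in $[F_n,F_{n+1})$: the total count is $\sum_{k\ge 0}\binom{n-1-k}{k}$, a classical identity equal to $F_{n-1}$ (provable by the same recursion $F_{n-1}=F_{n-2}+F_{n-3}$ that the binomial sum satisfies, with matching base cases), and this matches $|[F_n,F_{n+1})|=F_{n+1}-F_n=F_{n-1}$; since by Lemma~\ref{lem:zeckunique} distinct decompositions give distinct integers, the $\binom{n-1-k}{k}$-counts cannot overlap and must exhaust the interval, so every $N$ is hit. Running $n$ over all of $\N$ gives existence for all natural numbers. Dividing the per-$k$ count by $F_{n-1}$ yields $p_n(k)=\binom{n-1-k}{k}/F_{n-1}$, which is \eqref{eq:formulapnkzeck}.

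I expect the main obstacle to be purely bookkeeping: getting the gap encoding and its inequality constraints exactly right (in particular the asymmetry between the top gap $n-j_1\ge 2$ and the bottom gap $j_k-1\ge 0$, and the off-by-one in the number of gap variables), so that the shifted cookie count collapses cleanly to $\binom{n-1-k}{k}$. There is no deep difficulty — once the encoding is pinned down, everything follows from Lemma~\ref{lem:cookieproblem}, the Fibonacci identity $\sum_k\binom{n-1-k}{k}=F_{n-1}$, and the already-established uniqueness.
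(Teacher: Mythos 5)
Your proposal is correct and follows essentially the same route as the paper: both encode the choice of $k$ non-consecutive indices below $n$ as a stars-and-bars count yielding $\ncr{n-1-k}{k}$, sum over $k$ to obtain $F_{n-1}$ via the binomial--Fibonacci identity, and invoke the uniqueness lemma to conclude that these decompositions exhaust $[F_n, F_{n+1})$. The only cosmetic difference is that you parametrize by index gaps $d_i \ge 2$ and apply part (2) of Lemma \ref{lem:cookieproblem} directly, whereas the paper counts unused indices $y_j$ and shifts variables to reduce to part (1).
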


\begin{proof} Consider all $N$ in $[F_n, F_{n+1})$. The
number of integers in this interval is $F_{n+1}-F_n=F_{n-1}$. We
claim that each of these $F_{n-1}$ integers can be expressed as a
sum of a certain subset of $\{F_1, F_2,\ldots, F_n\}$ with the
properties that no two consecutive Fibonacci numbers appear in the
sum and that $F_n$ is one of the summands.  From the arguments in the proof of Lemma \ref{lem:zeckunique}, it is clear that $F_n$ must appear in this sum since otherwise the sum would be too small to
be an element of $[F_n, F_{n+1})$.

We now translate the previous claim into the combinatorial formulation of Lemma \ref{lem:cookieproblem}. Suppose we wish to have $k$ summands in
addition to the summand $F_n$ in our sum, with no two summands adjacent. Clearly $k \le \lfloor
\frac{n-1}2\rfloor$. Choosing a valid
set of $k$ summands, say $F_{m_1}, \dots, F_{m_k}$,
is equivalent to choosing $k$ indices $m_1, m_2, \ldots, m_k$ from
the set $\{1,2,\ldots,n-1\}$, with the property that $m_i < m_{1+1}
-1$ and $n-1$ is not chosen (as otherwise we would have the adjacent summands $F_{n-1}$ and $F_n$).

We may assume $k > 0$, as there is only one way to choose no additional summands.
We define the auxiliary sequence $y_j$ as follows: $y_0 = m_1
-1$, and for $1\leq j \leq k$, $y_j = m_{j+1}-m_j -1$ (as noted
earlier, $m_{k+1} = n$). For example, with $n=9$, $k=3$, and the
sequence $F_1 + F_3 + F_6 + F_9$, we have $y_0 = 0$, $y_1 = 1$, $y_2
= 2$ and $y_3 = 2$. Note $y_0$ is the number of indices before the
first index chosen, and for $j>0$, $y_j$ equals the number of unused indices
between $m_j$ and $m_{j+1}$.  Clearly we have $y_0 \ge 0$ and $y_j
\geq 1$ for $j>0$.  Thus we have used $k+1$ indices (including the
required index $n$ associated with the largest summand, $F_n$) and
$y_0+y_1+\cdots +y_k$ unused indices from $\{1,2,\ldots,n\}$.  Hence
we have
\be\label{eq:yeqn} (k+1) + y_0 + y_1 +\cdots +y_k\ =\ n.\ee

If we make the change of of variables $x_0= y_0$ and $x_j = y_j-1$
for $j>0$, then we have $x_j\geq 0$ for all $j$
with no other constraints on these variables other than they must
satisfy the identity implicitly given by \eqref{eq:yeqn}; that is
\be\label{eq:keyeqtocookiezeck} x_0 + \cdots + x_k \ = \ n-1-2k.\ee
In other words, in view of Lemma \ref{lem:zeckunique} we have a
bijection between the set of Zeckendorf decompositions with $k+1$
summands having $F_n$ as its largest summand and the set of all
non-negative integer solutions to  \eqref{eq:keyeqtocookiezeck}.
By Lemma \ref{lem:cookieproblem}, the number of
solutions to \eqref{eq:keyeqtocookiezeck} is
$\ncr{n-1-2k+(k+1-1)}{k+1-1}$ $=$ $\ncr{n-1-k}{k}$. Thus the number
of Zeckendorf decompositions having largest summand $F_n$ is
precisely \be \sum_{k=0}^{\lfloor \frac{n-1}2\rfloor}
\ncr{n-1-k}{k},\ee which, by a well-known identity for binomial sums,
equals $F_{n-1}$ (see Lemma \ref{lem:sumupdownfib} of Appendix
\ref{sec:appcombid} for a proof). As remarked, by
Lemma \ref{lem:zeckunique} each one of these sequences gives rise to
a distinct Zeckendorf sum. Thus the number of Zeckendorf
decompositions in the interval $[F_n, F_{n+1})$ is equal to
$F_{n-1}$, which is the total number of integers in that
interval.  As $n$ was arbitrary, and these intervals partition the
set of natural numbers, we have shown that every natural number has a
unique Zeckendorf decomposition, and the number of $N$ in $[F_n, F_{n+1})$ with exactly $k+1$ summands in its decomposition is $\ncr{n-1-k}{k}$. In other words, the probability of a number in this interval having precisely $k+1$ summands is $\ncr{n-1-k}{k}/F_{n-1}$.
\end{proof}


\section{Lekkerkerker's Theorem and the Variance}

We sketch how our approach easily yields Lekkerkerker's theorem. We only provide a sketch as, of course, Lekkerkerker's theorem follows immediately from our proof of the Gaussian behavior. We highlight the key steps as Lekkerkerker's theorem is of interest in its own right, and it is nice to have a new, elementary proof of it (our proof of the Gaussian behavior will involve Stirling's formula, and is different than the arguments below).

The average number of summands needed in the Zeckendorf decomposition is just \bea\label{eq:avenumbsummands} \mu_n & \ = \ & \sum_{k=0}^{\lfloor \frac{n-1}2\rfloor} (k+1) \frac{\ncr{n-1-k}{k}}{F_{n-1}} \nonumber\\ &=& 1 + \frac1{F_{n-1}} \sum_{k=0}^{\lfloor \frac{n-1}2\rfloor} k \ncr{n-1-k}{k} \nonumber\\ &=& 1 + \frac{\mathcal{E}(n)}{F_{n-1}}. \eea Thus the problem is reduced to computing \be \mathcal{E}(n) \ = \ \sum_{k=0}^{\lfloor \frac{n-1}2\rfloor} k \ncr{n-1-k}{k}. \ee

We can determine a closed-form expression for $\ce(n)$ by first showing that
it satisfies a certain recurrence relation.

\begin{lem}[Recurrence relation for $\ce(n)$]\label{lem:recurrencecen}  We have \be
\ce(n)+\ce(n-2) \ = \ (n-2)F_{n-3}.\ee \end{lem}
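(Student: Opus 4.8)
The plan is to derive the identity from a single application of the binomial ``absorption'' relation $k\binom{m}{k} = m\binom{m-1}{k-1}$ together with the binomial--Fibonacci identity $\sum_{k=0}^{\lfloor(m-1)/2\rfloor}\binom{m-1-k}{k} = F_{m-1}$ already used in the proof of Theorem~\ref{thm:zecknonconsec} (Lemma~\ref{lem:sumupdownfib}). First I would apply the absorption relation with $m = n-1-k$ to each term of
\[
\ce(n) \ = \ \sum_{k=0}^{\lfloor(n-1)/2\rfloor} k\,\ncr{n-1-k}{k},
\]
rewriting $k\,\ncr{n-1-k}{k} = (n-1-k)\,\ncr{n-2-k}{k-1}$; the $k=0$ term vanishes on both sides, so nothing is lost.

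Next I would reindex with $j = k-1$. The upper limit $\lfloor(n-1)/2\rfloor$ becomes $\lfloor(n-3)/2\rfloor$, which is exactly the range appearing in $\ce(n-2)$ and in the Fibonacci identity with parameter $n-2$, so the bookkeeping matches cleanly. This gives $\ce(n) = \sum_{j=0}^{\lfloor(n-3)/2\rfloor} (n-2-j)\,\ncr{n-3-j}{j}$. Splitting $n-2-j = (n-2) - j$, the first piece is $(n-2)\sum_j \ncr{n-3-j}{j} = (n-2)F_{n-3}$ by the binomial--Fibonacci identity, and the second piece is literally $\sum_j j\,\ncr{n-3-j}{j} = \ce(n-2)$. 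Rearranging yields $\ce(n) + \ce(n-2) = (n-2)F_{n-3}$, as claimed.

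The only thing requiring care — and it is mild — is the treatment of small $n$, where binomial coefficients with negative lower or upper arguments appear; with the standard conventions $\ncr{a}{b} = 0$ for $b < 0$ or $b > a$, and $F_0 = 1$ (forced by $F_2 = F_1 + F_0$), the identity holds for all $n \ge 3$, which one can confirm directly for $n = 3, 4$. I do not expect any genuine obstacle: no generating functions or differentiating identities are needed here. (One could alternatively apply Pascal's rule $\ncr{n-1-k}{k} = \ncr{n-2-k}{k} + \ncr{n-2-k}{k-1}$, which leads to $\ce(n) = \ce(n-1) + \ce(n-2) + F_{n-3}$; but deducing the stated two-step recurrence from that is more roundabout than the direct computation above.)
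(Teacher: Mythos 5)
Your proposal is correct and is essentially the paper's own proof: the paper performs the same absorption step (written out via factorials rather than citing $k\binom{m}{k}=m\binom{m-1}{k-1}$), the same reindexing $\ell=k-1$, and the same split into $(n-2)F_{n-3}$ via Lemma \ref{lem:sumupdownfib} minus $\ce(n-2)$. Nothing further is needed.
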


The proof follows from straightforward algebra, and is given in Appendix \ref{sec:appcombid}. Solving the recurrence relation yields

\begin{lem}[Formula for $\ce(n)$]\label{lem:formula}  We have \be
\ce(n) \ = \ \frac{n F_{n-1}}{\varphi^2+1} + O(F_{n-2}).\ee
\end{lem}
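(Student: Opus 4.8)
The plan is to solve the recurrence of Lemma~\ref{lem:recurrencecen} explicitly, to within the stated error. First note that any two solutions of the inhomogeneous recurrence $g(n)+g(n-2)=(n-2)F_{n-3}$ differ by a solution $h$ of the homogeneous recurrence $h(n)+h(n-2)=0$; since $h(n)=-h(n-2)$ makes $h$ constant on each residue class of $n$ modulo $4$, every such $h$ is $O(1)$. Hence it suffices to exhibit \emph{one} particular solution $P(n)$ satisfying $P(n)=\frac{nF_{n-1}}{\varphi^2+1}+O(F_{n-2})$; then $\ce(n)=P(n)+O(1)$ gives the lemma (the small-index values of $\ce$ are a fixed constant, hence harmless).

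To produce $P(n)$, I would look for a solution of the form $P(n)=(an+b)F_{n-1}+(cn+d)F_{n-2}$. Using $F_{n-3}=F_{n-1}-F_{n-2}$ and $F_{n-4}=2F_{n-2}-F_{n-1}$ one rewrites $P(n-2)$, and then $P(n)+P(n-2)$, as an explicit $\Z[n]$-linear combination of $F_{n-1}$ and $F_{n-2}$; matching this against $(n-2)F_{n-3}=(n-2)F_{n-1}-(n-2)F_{n-2}$ yields a linear system in $a,b,c,d$. Its leading part is $2a-c=1$ and $3c-a=-1$, forcing $a=\tfrac25$, $c=-\tfrac15$ (the constants $b,d$ solve the two remaining equations, but are irrelevant at the precision we need).

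It remains to simplify $P(n)$. By Binet's formula $F_m=(\varphi^{m+1}-\psi^{m+1})/\sqrt5$ with $\psi=-1/\varphi$, one has $F_{n-2}=\varphi^{-1}F_{n-1}+O(\varphi^{-n})$, so
\[ (an+b)F_{n-1}+(cn+d)F_{n-2}\ =\ \Big(a+\tfrac{c}{\varphi}\Big)nF_{n-1}+bF_{n-1}+dF_{n-2}+O(n\varphi^{-n}). \]
Now $a+c/\varphi=\tfrac25-\tfrac1{5\varphi}=\tfrac{2\varphi-1}{5\varphi}$, and using $\varphi^2=\varphi+1$ one checks $(2\varphi-1)(\varphi+2)=5\varphi$, i.e.\ $a+c/\varphi=\tfrac1{\varphi+2}=\tfrac1{\varphi^2+1}$. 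Since $F_{n-1}=O(F_{n-2})$, the terms $bF_{n-1}+dF_{n-2}$ are $O(F_{n-2})$, and $O(n\varphi^{-n})$ is negligible; hence $P(n)=\frac{nF_{n-1}}{\varphi^2+1}+O(F_{n-2})$, as claimed.

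The only real subtlety — and the step I expect to be the main obstacle — is precisely this last simplification: the two leading terms $anF_{n-1}$ and $cnF_{n-2}$ each have size $nF_{n-1}$, so the coefficient $1/(\varphi^2+1)$ only emerges after collapsing $F_{n-2}$ to $\varphi^{-1}F_{n-1}$ via Binet; reading off $a=\tfrac25$ naively would give the wrong constant. (Alternatively one can bypass the ansatz entirely and iterate the recurrence, $\ce(n)=\sum_{j\ge0}(-1)^j(n-2-2j)F_{n-3-2j}$ down to a bounded base term, then evaluate the resulting geometric-type series with Binet — the same identity $\varphi^2=\varphi+1$ does the work.)
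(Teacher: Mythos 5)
Your proof is correct, and it takes a genuinely different route from the one in the paper. The paper telescopes the recurrence of Lemma \ref{lem:recurrencecen} into the alternating sum $\sum_{\ell}(-1)^\ell(n-2-2\ell)F_{n-3-2\ell}$, converts it via Binet's formula into a geometric series plus the derived series $\mathcal{S}(m,x)=\sum_j jx^j$, and extracts the main term from $\frac{n}{1+\varphi^{-2}}$. You instead use the standard particular-plus-homogeneous decomposition: the homogeneous solutions of $h(n)+h(n-2)=0$ are $4$-periodic, hence $O(1)$, and the ansatz $P(n)=(an+b)F_{n-1}+(cn+d)F_{n-2}$ produces a particular solution with $a=\tfrac25$, $c=-\tfrac15$; I checked your linear system and the identity $(2\varphi-1)(\varphi+2)=5\varphi$, and your warning about the leading constant is well taken --- the two terms $anF_{n-1}$ and $cnF_{n-2}$ are the same order, so the coefficient $\tfrac1{\varphi^2+1}$ only appears after collapsing $F_{n-2}$ to $\varphi^{-1}F_{n-1}$. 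The one small point you gloss over is the consistency of the remaining $2\times2$ system in $(b,d)$; it has matrix $\bigl(\begin{smallmatrix}2&-1\\-1&3\end{smallmatrix}\bigr)$ with determinant $5$, so it is always solvable (in fact $b=-\tfrac25$, $d=0$), and you should say so, since otherwise the assumed form of $P$ might not exist. In exchange for the ansatz, your method avoids the telescoping and the evaluation of $\mathcal{S}(m,x)$ entirely and yields an exact closed form for $\ce(n)$ with no extra work, whereas the paper's route is more self-contained (it re-derives everything from the explicit sum) and generalizes more directly to the messier recurrences treated in \cite{MW}.
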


The proof follows from using telescoping sums to get an
expression for $\ce(n)$, which is then evaluated by inputting Binet's
formula\footnote{There are many ways of proving Binet's formula. One of the simplest is through generating functions and partial fractions; a generalization of this plays a key role of the proof of the general case of Theorem \ref{thm:main} (see \cite{MW}).} and differentiating identities. Recall Binet's formula (with our notation) asserts \be\label{eq:binetformula} F_n  \ = \ \frac{\varphi}{\sqrt{5}} \cdot  \varphi^n - \frac{1-\varphi}{\sqrt{n}}\cdot (1-\varphi)^n, \ \ \ \varphi \ = \ \frac{1+\sqrt{5}}2.\ee The details are provided in Appendix \ref{sec:appcombid}.

Lekkerkerker's Theorem now follows immediately by substituting the result for $\ce(n)$ from Lemma \ref{lem:formula} into the equation for the mean, \eqref{eq:avenumbsummands}. Explicitly,

\begin{thm}\label{thm:number} The average number of non-consecutive
Fibonacci summands used in representing numbers in $[F_n, F_{n+1})$
is \be\label{eq:meann} \E[K_n] \ = \ \mu_n \ = \ \frac{5-\sqrt{5}}{10}\ n \ - \ \frac{2}{5}\ =\ \frac{1}{\varphi^2+1}\ n\ - \ \frac{2}{5} \ = \ \mathbb{E}[\mathcal{K}_n]+1.\ee
\end{thm}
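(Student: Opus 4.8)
The identity $\mu_n = 1 + \ce(n)/F_{n-1}$ from \eqref{eq:avenumbsummands} reduces the theorem to evaluating $\ce(n)/F_{n-1}$ with its constant term, not merely its leading term; since Lemma \ref{lem:formula} only controls $\ce(n)$ to within $O(F_{n-2})$ --- enough for Lekkerkerker's $\tfrac{n}{\varphi^2+1}+O(1)$ but not for the sharp constant in \eqref{eq:meann} --- the plan is to sharpen it to a two-term asymptotic $\ce(n) = \tfrac{n}{\varphi^2+1}F_{n-1} + c_0 F_{n-1} + o(F_{n-1})$, determine $c_0$, and divide.

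The cleanest route to the sharpened asymptotic uses the generating function implicit in Lemma \ref{lem:cookieproblem}: weighting a summand by $x^k$ gives $\sum_n\big(\sum_k \ncr{n-k}{k}x^k\big)z^n = (1-z-xz^2)^{-1}$, and differentiating in $x$ at $x=1$ and shifting the index yields
\[
\sum_n \ce(n)\,z^n \ = \ \frac{z^3}{(1-z-z^2)^2} \ = \ \frac{z^3}{(1-\varphi z)^2\,(1+z/\varphi)^2}.
\]
Partial fractions over the double poles $z=1/\varphi$ and $z=-\varphi$ then express $\ce(n)$ exactly as $(An+A+B)\varphi^n + (Cn+C+D)(-1/\varphi)^n$, where $A$ is the value, and $B$ a scalar multiple of the derivative, at $z=1/\varphi$ of $z^3(1+z/\varphi)^{-2}$. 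Feeding in Binet's formula \eqref{eq:binetformula} in the form $F_{n-1} = \big(\varphi^n - (-1/\varphi)^n\big)/\sqrt5$ and expanding the geometric series $\big(1-(-1/\varphi^2)^n\big)^{-1}$ (legitimate since $\varphi^{-2}<1$) gives $\mu_n = 1 + \sqrt5\,(An+A+B) + O(n\varphi^{-2n})$. An equivalent but more bookkeeping-heavy route, matching the sketch in the text, is to telescope Lemma \ref{lem:recurrencecen} into $\ce(n) = \sum_{j\ge0}(-1)^j(n-2-2j)F_{n-3-2j} \pm \ce(n_0)$ with $n_0\in\{2,3\}$, substitute \eqref{eq:binetformula}, and evaluate the resulting $\sum_j j(-\varphi^{-2})^j$-type sums in closed form; there one must check that the parity-dependent boundary term $\ce(n_0)$, the $(-1/\varphi)$-part, and the discrepancy between the finite sum and its infinite completion assemble into an $O(1)$ contribution to the constant that is, in the end, parity-independent.

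It then remains to simplify using $\varphi^2=\varphi+1$, hence $\sqrt5\,\varphi = \varphi^2+1 = \tfrac{5+\sqrt5}{2}$. One finds $A = \varphi(\varphi^2+1)^{-2}$, so $\sqrt5 A = (\varphi^2+1)^{-1} = \tfrac{5-\sqrt5}{10}$, which is exactly the Lekkerkerker constant and fixes the leading term of \eqref{eq:meann}; and $\sqrt5 B = -(3\varphi^2+1)(\varphi^2+1)^{-2} = -\tfrac{9-\sqrt5}{10}$, so the additive constant is $1 + \sqrt5 A + \sqrt5 B = \tfrac35$. Hence $\E[K_n] = \mu_n = \tfrac{1}{\varphi^2+1}n + \tfrac35 + O(n\varphi^{-2n})$, and subtracting the forced summand $F_n$ gives $\E[\mathcal K_n] = \mu_n - 1 = \tfrac{1}{\varphi^2+1}n - \tfrac25 + O(n\varphi^{-2n})$, as in \eqref{eq:meann}; the exponentially small remainder comes entirely from the $(-1/\varphi)^n$ pieces of $\ce(n)$ and of $F_{n-1}$.

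The main obstacle is not the shape of the argument but pinning down that \emph{constant}: the $\tfrac{n}{\varphi^2+1}$ term drops out of either route at once, whereas the additive constant is delicate --- in the generating-function route it requires the \emph{second} Laurent coefficient at a double pole (a derivative, not a plain residue), and in the telescoping route the parity-dependent boundary value $\ce(n_0)$ together with the top of the alternating sum (finitely many terms with Fibonacci index $O(1)$) must be tracked exactly. One must also keep straight the $\pm1$ between $K_n$ and $\mathcal K_n$, since the displayed constants $\tfrac35$ and $-\tfrac25$ differ by precisely that $1$.
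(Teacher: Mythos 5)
Your plan is correct, and it is more careful than the paper's own treatment. The paper asserts that the theorem ``follows immediately'' by substituting Lemma \ref{lem:formula} into \eqref{eq:avenumbsummands}, but that lemma only controls $\ce(n)$ to within $O(F_{n-2})$, which fixes the slope $\frac{1}{\varphi^2+1}$ and says nothing about the additive constant; the paper itself concedes in Appendix \ref{sec:appcombid} that ``a more careful analysis'' is needed for the exact form of \eqref{eq:meann}, and that analysis is only sketched (telescoping Lemma \ref{lem:recurrencecen}, Binet's formula, the closed form for $\mathcal{S}(m,x)$, and exact tracking of the boundary term and the $(1-\varphi)^n$ pieces). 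Your generating-function route --- $\sum_n \ce(n)z^n = z^3(1-z-z^2)^{-2}$, partial fractions at the double poles, division by Binet's form of $F_{n-1}$ --- reaches the same two-term asymptotic with no parity case analysis, at the cost of computing a second Laurent coefficient rather than a plain residue; it is essentially the method the paper defers to \cite{MW} for the general case, while your alternative telescoping route is the paper's own. Your constants check out: $\sqrt5\,A=\frac{1}{\varphi^2+1}=\frac{5-\sqrt5}{10}$, $\sqrt5\,B=-\frac{9-\sqrt5}{10}$, hence $1+\sqrt5(A+B)=\frac35$.

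One point you flag in passing deserves to be made explicit: your computation gives $\E[K_n]=\frac{n}{\varphi^2+1}+\frac35+O(n\varphi^{-2n})$ and $\E[\mathcal{K}_n]=\frac{n}{\varphi^2+1}-\frac25+O(n\varphi^{-2n})$, whereas \eqref{eq:meann} attaches the constant $-\frac25$ to $\E[K_n]$ and asserts the chain of equalities exactly. A direct check at $n=6$ (there $\ce(6)=10$ and $F_5=8$, so $\E[\mathcal{K}_6]=\frac{10}{8}=1.25$ against $\frac{6}{\varphi^2+1}-\frac25\approx 1.258$) confirms your version: the constant $-\frac25$ belongs with $\mathcal{K}_n$, not $K_n$, and the equalities hold only up to an exponentially small error. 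So your proof is right as written, and the displayed statement needs its constant shifted by $1$ (or the roles of $K_n$ and $\mathcal{K}_n$ swapped); do state this correction rather than citing \eqref{eq:meann} as if it agreed with you.
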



A similar calculation shows

\begin{thm} The variance in the number of non-consecutive Fibonacci used in representing numbers in $[F_n, F_{n+1})$
is \be {\rm Var}(K_n) \ = \ \sigma_n^2 \ = \ \frac{1}{5\sqrt{5}}\ n\
 - \ \frac{2}{25}\ = \ \frac{\varphi}{5(\varphi+2)}\ n\ - \ \frac{2}{25} \ = \ {\rm Var}(\mathcal{K}_n).\ee
\end{thm}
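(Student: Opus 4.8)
The plan is to mimic the route used for Lekkerkerker's theorem. Since $\mathcal{K}_n = K_n-1$, the two variables have the same variance, so it suffices to compute the first two moments of the density $p_n(k) = \ncr{n-1-k}{k}/F_{n-1}$ from \eqref{eq:formulapnkzeck}. Writing $\mathrm{Var}(\mathcal{K}_n) = \E[\mathcal{K}_n(\mathcal{K}_n-1)] + \E[\mathcal{K}_n] - \E[\mathcal{K}_n]^2$ and recalling that $\E[\mathcal{K}_n] = \mu_n - 1$ is already known from Theorem \ref{thm:number}, the one genuinely new quantity to control is the descending-factorial sum
\[
\ce_2(n) \ := \ \sum_{k=0}^{\lfloor (n-1)/2\rfloor} k(k-1)\ncr{n-1-k}{k}, \qquad \E[\mathcal{K}_n(\mathcal{K}_n-1)] \ = \ \frac{\ce_2(n)}{F_{n-1}}.
\]
Working with $k(k-1)$ rather than $k^2$ keeps the binomial bookkeeping cleanest, exactly as working with $k$ rather than $k+1$ did for $\ce(n)$ in \eqref{eq:avenumbsummands}.

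The first step is to derive a linear recurrence for $\ce_2(n)$ in the spirit of Lemma \ref{lem:recurrencecen}. Splitting $\ncr{n-1-k}{k} = \ncr{n-2-k}{k} + \ncr{n-2-k}{k-1}$ via Pascal's rule and reindexing $k \mapsto k+1$ in the second piece rewrites $\ce_2(n)$ in terms of $\ce_2(n-1)$, $\ce_2(n-2)$, $\ce(n-1)$, $\ce(n-2)$ and Fibonacci values; collecting terms should produce an identity of the shape $\ce_2(n) + \ce_2(n-2) = (\text{quadratic in }n)\,F_{n-3} + (\text{lower order})$, paralleling the right-hand side $(n-2)F_{n-3}$ of Lemma \ref{lem:recurrencecen}. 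This is the same flavor of algebra as the appendix proof of Lemma \ref{lem:recurrencecen}. One then solves the recurrence by telescoping and evaluates the resulting finite sums by inputting Binet's formula \eqref{eq:binetformula} together with the standard differentiating identities for $\sum_j j\,\varphi^{2j}$ and $\sum_j j^2\,\varphi^{2j}$ (differentiate a finite geometric series in $x$ and set $x = \varphi^{\pm 2}$), yielding a closed form $\ce_2(n) = \frac{n^2 F_{n-1}}{(\varphi^2+1)^2} + (\text{lower order})$ in which I must retain the exact coefficients of the $nF_{n-1}$ and $F_{n-1}$ terms, not merely a big-$O$ bound.

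Finally I assemble $\sigma_n^2 = \dfrac{\ce_2(n)+\ce(n)}{F_{n-1}} - \Big(\dfrac{\ce(n)}{F_{n-1}}\Big)^{2}$ using the closed form for $\ce(n)$ behind Lemma \ref{lem:formula} and Theorem \ref{thm:number}. The $\tfrac{n^2}{(\varphi^2+1)^2}$ terms cancel, leaving a linear polynomial in $n$; simplifying with $\varphi^2 = \varphi + 1$ gives $\tfrac{1}{5\sqrt5}\,n - \tfrac{2}{25}$, and the identity $\tfrac{1}{5\sqrt5} = \tfrac{\varphi}{5(\varphi+2)}$ (again just $\varphi^2 = \varphi+1$) yields the second displayed form, while $\mathrm{Var}(K_n) = \mathrm{Var}(\mathcal{K}_n)$ gives the last equality. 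The main obstacle is exactly this cancellation: because the $n^2$ contributions of $\ce_2(n)/F_{n-1}$ and of $(\ce(n)/F_{n-1})^2$ must agree identically, every ingredient — the recurrence for $\ce_2(n)$, its telescoped solution, and the formula for $\ce(n)$ — has to be carried to one more order of precision than was needed for Lekkerkerker's theorem, keeping the exact rational and $\sqrt5$ constants rather than absorbing them into error terms.

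A cleaner alternative, which sidesteps the separate recurrence, is to use the two-variable generating function $\sum_{n,k}\ncr{n-1-k}{k}x^k y^n = \dfrac{y}{1-y-xy^2}$: the coefficient of $y^n$ is $\sum_k \ncr{n-1-k}{k} x^k$, so applying $x\partial_x$ and $(x\partial_x)^2$, setting $x = 1$, and extracting the $y^n$ coefficient by a partial-fraction expansion produces $\ce(n)$ and $\ce_2(n)$ directly. The computational burden is comparable but more systematic, and this is the route taken for the general case in \cite{MW}.
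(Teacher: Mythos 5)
Your plan is precisely the ``similar calculation'' the paper alludes to: the paper gives no argument for this theorem beyond that remark, and your route --- reducing to the second factorial moment $\ce_2(n)=\sum_k k(k-1)\ncr{n-1-k}{k}$, deriving a recurrence parallel to Lemma \ref{lem:recurrencecen} (the manipulation in the appendix in fact yields the clean identity $\ce_2(n)+\ce_2(n-2)=(n-3)\,\ce(n-2)$, of exactly the shape you predict), telescoping, and evaluating via Binet's formula with differentiating identities --- is the intended proof. You also correctly isolate the one genuine pitfall, namely that the cancellation of the $n^2$ terms in $\ce_2(n)/F_{n-1}-\left(\ce(n)/F_{n-1}\right)^2$ forces every closed form to be carried one order beyond the $O(F_{n-2})$ precision recorded in Lemma \ref{lem:formula}.
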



\section{Gaussian Behavior}\label{sec:erdoskactype}

In the last section we computed the mean by evaluating certain combinatorial sums. We could similarly derive an explicit formula for the variance, or more generally, any moment. We choose instead to analyze the density $p_n(k)$ in greater detail, and show that it converges pointwise to a Gaussian with mean $\mu_n$ approximately $\frac{n}{\varphi^2+1}=\frac{n}{\varphi+2}$ and variance $\sigma_n^2 \approx \frac{\varphi n}{5(\varphi+2)}$. In the course of proving this convergence, the mean and the standard deviation naturally fall out of the calculation. While this does make the previous section superfluous, we chose to include it as it provides an elementary proof of Lekkerkerker (as well as telling us what the mean and variance are, which are a great aid in performing the Stirling analysis below\footnote{If we didn't know $\mu_n$ and $\sigma_n$ we would just keep these as initially free parameters, and then choose the values appropriately to ensure the limits below exist.}).

Before delving into the proof of Theorem \ref{thm:main}, we provide some evidence by looking at the number of summands in the
Zeckendorf decomposition for integers in $[F_{2010}, F_{2011})$ (see
Figure \ref{fig:GaussianBehaviorNumFibands}); the fit is visually striking.
\begin{figure}
\begin{center}
\includegraphics[width=4.0in]{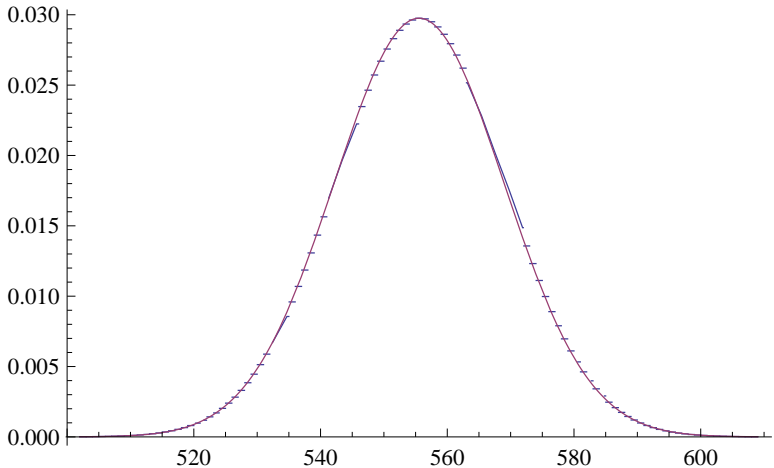}
\caption{\label{fig:GaussianBehaviorNumFibands} Plot of the probability $p_{2010}(k)$ of having exactly $k+1$ summands in the Zeckendorf decompositions of integers in $[F_{2010}, F_{2011})$ (with $p_n(k) = \ncr{n-1-k}{k}/F_{n-1}$) against the Gaussian with mean $\mu_{2010} \approx 555.55$ and variance $\sigma_{2010}^2 \approx 179.78$.}
\end{center}
\end{figure}

\subsection{Preliminaries}

The computation is a little cleaner if instead of studying the interval $[F_n, F_{n+1})$ we instead study $[F_{n+1},F_{n+2})$. The density is now \be p_{n+1}(k) \ = \ \frac{\ncr{n-k}{k}}{F_n}. \ee  We list some useful expansions: \bea \mu_{n+1} & \ = \ & \frac{n}{\varphi+2} + O(1) \nonumber\\ \sigma_{n+1}^2 & = & \frac{\varphi n}{5(\varphi + 2)} + O(1) \nonumber\\ F_n & = & \frac{\varphi}{\sqrt{5}} \varphi^n + O\left((1-\varphi)^n\right). \eea

We will expand $\ncr{n-k}{k}$ using Stirling's formula, which says for large $m$ that \bea m! & \ = \ & m^m e^{-m} \sqrt{2\pi m} \left(1 + \frac1{12m} + \frac1{288m^2} - \frac{139}{51840m^3} + \cdots\right)\nonumber\\ & \ = \ & m^m e^{-m} \sqrt{2\pi m}\left(1 + O\left(\frac1m\right)\right).\eea As the mean and variance\footnote{While we have not proved the variance is of size $n$, a similar calculation as that for the mean yields it without any trouble; in the interest of space we merely state the result in Theorem \ref{thm:number}. We could also get the right order of magnitude for the variance by the Stirling computation that follows.} are both of size $n$, for any fixed $\gep > 0$ as $n\to\infty$ there is negligible probability of all $k$ with $|k-\mu_{n+1}| > n^{1/2+\gep}$.\footnote{This follows by Chebyshev's inequality. As we are on the order of $n^\epsilon$ standard deviations from the mean, the probability is at most $O(1/n^{2\epsilon})$, which tends to zero with $n$.} Thus we need only worry about analyzing $p_{n+1}(k)$ for $|k-\mu_{n+1}| \le n^{1/2+\gep}$. For such $k$, we have $k$, $n-k$ and $n-2k$ will all be of size $n$ and hence large, and thus Stirling's formula yields a great approximation.

After some simple algebra, which includes using Binet's formula (see \eqref{eq:binetformula}) for $F_n$, we find
\nbea p_{n+1} (k) & = & \ncr {n-k}{k} \frac{1}{F_n}
\\ & = & \frac{(n-k)!}{k!(n-2k)!}\ \frac{1}{F_n}
\\ & = & \frac1{\sqrt{2\pi}} \frac{(n-k)^{n-k+\foh} \left(1+O(\frac1{n-k})\right)}{k^{k+\foh} \left(1+O\left(\frac1{k}\right)\right) (n-2k)^{n-2k + \foh}  \left(1 + O(\frac1{n-2k})\right)} \ \frac{\sqrt{5}}{\varphi \cdot \varphi^n \left(1+O\left(\frac1{\varphi^n}\right)\right)} \\ & = & \frac1{\sqrt{2\pi}} \frac{(n-k)^{n-k+\foh}}{k^{k+\foh} (n-2k)^{n-2k + \foh}} \ \frac{\sqrt{5}}{\varphi \cdot \varphi^n} \left(1+O\left(\frac1{n}\right)\right)\\  &=& f_{n+1}(k) \left(1 + O\left(\frac1n\right)\right).
\neea

It suffices to analyze $f_{n+1}(k)$ for $|k-\mu_{n+1}| \le n^{1/2+\gep}$, as the $O(1/n)$ term leads to negligible changes in the density function. We now split off the terms that exponentially depend on $n$, and write
\nbea f_{n+1} (k) & = & \left(\frac{1}{\sqrt{2 \pi}} \sqrt{\frac{(n-k)}{k(n-2k)}} \frac{\sqrt{5}}{\varphi} \right) \left( \varphi^{-n} \frac{(n-k)^{n-k}}{k^{k} (n-2k)^{n-2k}} \right) \nonumber\\ &=& N_n(k) S_n(k).
\neea

We change variables and replace $k$ with its distance from the mean in units of the standard deviation, $\sigma_{n+1}$. Thus we write \be k \ = \ \mu_{n+1} + x \sigma_{n+1}, \ee and \be f_{n+1}(k) dk \ = \  f_{n+1}(\mu_{n+1} + \sigma_{n+1} x)\sigma_{n+1} dx. \ee It is essential that we record $dk$ transforms to $\sigma_{n+1} dx$. In these new variables the main action occurs at $x=0$, and the scale is on the order of $1$; in other words, once $x$ is large (such as $n^{\epsilon}$) then we are many standard deviations away and the density is negligible.

The next few pages are the detailed computation. While the computations are long in places, the basic idea (the combinatorial perspective) is straightforward: viewing the problem combinatorially yields an explicit density function, whose large $n$ asymptotics follow from Stirling's formula.

\subsection{Analysis of $N_n(k)$}

\begin{lem}\label{lem:thmmainnnk} For any $\gep > 0$ we have \be N_n(k)\ =\ \frac{1}{\sqrt{2 \pi \sigma_{n+1}^2}} \cdot\left(1 + O\left(n^{-1/2+\gep}\right)\right). \ee \end{lem}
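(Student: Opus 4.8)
The plan is to substitute the scaling $k = \mu_{n+1} + x\sigma_{n+1}$ directly into the closed form $N_n(k) = \frac{\sqrt5}{\varphi\sqrt{2\pi}}\sqrt{\frac{n-k}{k(n-2k)}}$ and track relative errors. We only need the estimate for $|k-\mu_{n+1}| \le n^{1/2+\gep}$, and since $\mu_{n+1} = \frac{n}{\varphi+2}+O(1)$ with $\frac1{\varphi+2} = \frac{5-\sqrt5}{10} < \frac12$, each of $k$, $n-k$, $n-2k$ stays of order $n$ throughout this window; in particular $n-2k$ is bounded away from $0$, so no denominator degenerates and the Stirling-type asymptotics are uniform in $k$.

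First I would record that in this range
\[ k = \frac{n}{\varphi+2}\left(1+O(n^{-1/2+\gep})\right), \quad n-k = \frac{\varphi^2 n}{\varphi+2}\left(1+O(n^{-1/2+\gep})\right), \quad n-2k = \frac{\varphi n}{\varphi+2}\left(1+O(n^{-1/2+\gep})\right), \]
where I used $\varphi^2 = \varphi+1$ and the fact that both the $O(1)$ error in $\mu_{n+1}$ and the displacement $x\sigma_{n+1} = O(n^{1/2+\gep})$ are $O(n^{-1/2+\gep})$ \emph{relative} to the size-$n$ main terms. Multiplying and dividing these three expansions and using $(1+O(\delta))^{-1} = 1+O(\delta)$ and $\sqrt{1+O(\delta)} = 1+O(\delta)$ as $\delta\to0$ gives
\[ \sqrt{\frac{n-k}{k(n-2k)}} = \sqrt{\frac{\varphi(\varphi+2)}{n}}\left(1+O(n^{-1/2+\gep})\right). \]

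Next I would collect the constants. Since $\frac{5}{\varphi^2}\cdot\varphi(\varphi+2) = \frac{5(\varphi+2)}{\varphi}$, we get
\[ N_n(k) = \frac{\sqrt5}{\varphi\sqrt{2\pi}}\sqrt{\frac{\varphi(\varphi+2)}{n}}\left(1+O(n^{-1/2+\gep})\right) = \frac{1}{\sqrt{2\pi}}\sqrt{\frac{5(\varphi+2)}{\varphi n}}\left(1+O(n^{-1/2+\gep})\right). \]
Finally, from $\sigma_{n+1}^2 = \frac{\varphi n}{5(\varphi+2)}+O(1)$ we have $\frac1{\sigma_{n+1}^2} = \frac{5(\varphi+2)}{\varphi n}\left(1+O(1/n)\right)$, and absorbing the $O(1/n)$ into $O(n^{-1/2+\gep})$ yields $N_n(k) = \frac{1}{\sqrt{2\pi\sigma_{n+1}^2}}\left(1+O(n^{-1/2+\gep})\right)$, as claimed.

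The computation is routine; the only thing requiring care is the bookkeeping --- keeping every error relative to the correct size-$n$ main term, so that an additive $O(n^{1/2+\gep})$ perturbation of $k$ becomes a multiplicative factor $1+O(n^{-1/2+\gep})$ --- together with the (easy) observation that $\mu_{n+1}/n \to \frac1{\varphi+2} < \frac12$, which is what keeps $n-2k$ of order $n$ and hence makes the estimate uniform over the whole window $|k-\mu_{n+1}| \le n^{1/2+\gep}$.
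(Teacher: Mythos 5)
Your proposal is correct and follows essentially the same route as the paper: both substitute $k=\mu_{n+1}+x\sigma_{n+1}$, observe that the displacement is $O(n^{-1/2+\gep})$ relative to the size-$n$ quantities $k$, $n-k$, $n-2k$, and then evaluate the resulting constant via $\varphi^2=\varphi+1$ and $\sigma_{n+1}^2=\frac{\varphi n}{5(\varphi+2)}+O(1)$ to recognize $\frac{1}{\sqrt{2\pi\sigma_{n+1}^2}}$. The only cosmetic difference is that the paper first normalizes by $n$ (writing $k/n=C+u$ with $u=x\sigma_{n+1}/n$) before dropping the perturbation, whereas you track the three factors multiplicatively; the bookkeeping and the resulting error term are identical.
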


Let $C = \frac{1}{\varphi+2}$, so $\mu_{n+1} = Cn + O(1)$,  and recall $\sigma_{n+1}^2 = \frac{\varphi n}{5(\varphi+2)} + O(1)$. We use a change of variable and some algebra to simplify $N_n(k)$. We set \be u  \ = \ \frac{\sigma_{n+1}}{n}x \ \approx \ \frac{x}{\sqrt{n}} \ \ll \ O(n^{-1/2+\gep}), \ee   and find
\nbea
N_n(k) &\ =\ & \frac{1}{\sqrt{2 \pi}} \sqrt{\frac{n-k}{k(n-2k)}} \frac{\sqrt{5}}{\varphi}
\\ & = & \frac{1}{\sqrt{2 \pi n}} \sqrt{\frac{1-k/n}{(k/n)(1-2k/n)}} \frac{\sqrt{5}}{\varphi}
\\ & = & \frac{1}{\sqrt{2 \pi n }} \sqrt{\frac{1- (\mu_{n+1}+\sigma_{n+1} x)/n}{((\mu_{n+1}+\sigma_{n+1} x)/n)(1-2(\mu_{n+1}+\sigma_{n+1} x)/n)}} \frac{\sqrt{5}}{\varphi}
\\ & = & \frac{1}{\sqrt{2 \pi n }} \sqrt{\frac{1- C - u}{(C+u)(1-2C-2u)}} \frac{\sqrt{5}}{\varphi} \cdot \left(1 + \frac1n\right),
\neea where the last error arises from replacing $\mu_{n+1}$ with $Cn + O(1)$. In fact, as $u = O(n^{-1/2+\gep})$ we may drop the $u$'s at the cost of replacing the error term $O(n^{-1})$ with $O(n^{-1/2+\gep})$. The following relations help simplify our expression for $N_n(k)$: $$\varphi+1 \ = \ \varphi^2, \ \ \ C = \frac1{\varphi+2}, \ \ \ 1-C \ = \ \frac{\varphi+1}{\varphi+2}, \ \ \ 1 - 2C \ = \ \frac{\varphi}{\varphi+2}.$$
Using these, as well as $\sigma_{n+1}^2 = \frac{\varphi n}{5(\varphi+2)} + O(1)$ and $\varphi+1=\varphi^2$, we find \nbea
N_n(k) & \approx & \frac{1}{\sqrt{2 \pi n }} \sqrt{\frac{1- C}{C(1-2C)}} \frac{\sqrt{5}}{\varphi}\cdot\left(1 + O\left(n^{-1/2+\gep}\right)\right)
\\ & = & \frac{1}{\sqrt{2 \pi n }} \sqrt{\frac{(\varphi+1)(\varphi+2)}{\varphi}} \frac{\sqrt{5}}{\varphi} \cdot\left(1 + O\left(n^{-1/2+\gep}\right)\right)
\\ & = & \frac{1}{\sqrt{2 \pi n }} \sqrt{\frac{5(\varphi+2)}{\varphi}} \cdot\left(1 + O\left(n^{-1/2+\gep}\right)\right)
\\ & = & \frac{1}{\sqrt{2 \pi \sigma_{n+1}^2}} \cdot\left(1 + O\left(n^{-1/2+\gep}\right)\right).
\neea

\subsection{Analysis of $S_n(k)$}

\begin{lem}\label{lem:thmmainsnk} For any $\gep > 0$ we have $S_n(k) = \exp(-x^2/2) \exp\left(n^{-1/2+3\gep}\right)$. \end{lem}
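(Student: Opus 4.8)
The plan is to take logarithms, pull a factor of $n$ out of the factorial powers, and Taylor expand. Writing $\beta = k/n$ and using the identity $(n-k)-k-(n-2k)=0$ to cancel the $\log n$ terms, one finds $\log S_n(k) = -n\log\varphi + n\, g(\beta)$, where
\[
g(\beta) \ = \ (1-\beta)\log(1-\beta)\ -\ \beta\log\beta\ -\ (1-2\beta)\log(1-2\beta).
\]
By the Chebyshev cutoff recorded in the Preliminaries we need only consider $|k-\mu_{n+1}| \le n^{1/2+\gep}$, i.e.\ $|x|\le n^{\gep}$; together with $\mu_{n+1} = Cn + O(1)$ and $C = \tfrac1{\varphi+2}$ this gives $\beta = C + u$ with $u = x\sigma_{n+1}/n + O(1/n) = O(n^{-1/2+\gep})$. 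So everything reduces to the behaviour of $g$ near $\beta = C$.

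Next I would Taylor expand to third order, $g(C+u) = g(C) + g'(C)u + \tfrac12 g''(C)u^2 + \tfrac16 g'''(\xi)u^3$ for some $\xi$ between $C$ and $C+u$. The first two coefficients are where the golden-ratio identities $\varphi^2 = \varphi+1$, $1-C = \varphi^2/(\varphi+2)$, $1-2C = \varphi/(\varphi+2)$ do all the work: a short computation gives $g(C) = \log\varphi$, so the $-n\log\varphi$ term is cancelled \emph{exactly}; and since $g'(\beta) = \log\!\big((1-2\beta)^2/(\beta(1-\beta))\big)$ with $(1-2C)^2 = C(1-C)$, we get $g'(C) = 0$. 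Thus the expansion starts at the quadratic term, and $g''(\beta) = \tfrac1{1-\beta} - \tfrac1\beta - \tfrac4{1-2\beta}$ evaluates to $g''(C) = -\tfrac{5(\varphi+2)}{\varphi}$, which is precisely $-1$ divided by the limiting value of $\sigma_{n+1}^2/n$.

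Plugging back in, $n\,g(C+u) = n\log\varphi + \tfrac{n}{2}g''(C)u^2 + O(n|u|^3)$. Since $u = x\sigma_{n+1}/n + O(1/n)$, the quadratic term is $\tfrac12 g''(C)\,x^2\sigma_{n+1}^2/n + O(n^{-1/2+\gep}) = -\tfrac{x^2}{2} + O(n^{-1/2+\gep})$, using $\sigma_{n+1}^2/n = \tfrac{\varphi}{5(\varphi+2)} + O(1/n) = -1/g''(C) + O(1/n)$; and the cubic term is $O(n\cdot n^{-3/2+3\gep}) = O(n^{-1/2+3\gep})$. Hence $\log S_n(k) = -\tfrac{x^2}{2} + O(n^{-1/2+3\gep})$, and exponentiating gives $S_n(k) = \exp(-x^2/2)\big(1 + O(n^{-1/2+3\gep})\big)$, which is the assertion.

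The main obstacle is bookkeeping the errors rather than any conceptual difficulty: one must verify that (i) the $O(1)$ in $\mu_{n+1} = Cn + O(1)$ and the $O(1)$ in $\sigma_{n+1}^2$ perturb the answer only at order $n^{-1/2+\gep}$; (ii) $g'''(\xi) = O(1)$, which holds because $u\to 0$ forces $\xi$ into a compact subinterval of $(0,\tfrac12)$ on which $g'''$ is bounded; and (iii) every error is dominated by $n^{-1/2+3\gep}$ uniformly over $|x|\le n^{\gep}$, the cubic term $O(n|u|^3)$ being the one that dictates the final exponent. The only genuinely delicate algebraic steps, $g(C) = \log\varphi$ and $g'(C) = 0$, are short, but they are exactly what pins down $C = \tfrac1{\varphi+2}$ and forces the Gaussian shape.
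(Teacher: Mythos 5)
Your proposal is correct and follows essentially the same route as the paper: take logarithms, change variables to $k=\mu_{n+1}+x\sigma_{n+1}$, use the golden-ratio identities $1-C=\varphi^2/(\varphi+2)$ and $1-2C=\varphi/(\varphi+2)$ to cancel the $-n\log\varphi$ term, observe that the linear term in $u$ vanishes, and identify the quadratic coefficient with $-1/(2\sigma_{n+1}^2/n)$. Packaging the three logarithmic terms into the single function $g(\beta)$ with $g(C)=\log\varphi$, $g'(C)=0$, $g''(C)=-5(\varphi+2)/\varphi$ is a cleaner bookkeeping of the same expansion (and the Lagrange-remainder control of the cubic term is a slight improvement on the paper's informal ``we never need to keep more than the $u^2$ terms''), but it is not a different argument.
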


\begin{proof}
To understand $$S_n(k) \ = \ \varphi^{-n} \frac{(n-k)^{n-k}}{k^{k} (n-2k)^{n-2k}}$$ we take logarithms and once again change variables by $k = \mu_{n+1} + x \sigma_{n+1}$. We find
\bea\label{eq:bigmessfirst} & &  \log S_n(k) \ = \ \log \left( \varphi^{-n} \frac{(n-k)^{n-k}}{k^{k} (n-2k)^{n-2k}} \right)
\nonumber\\ & = & -n \log\varphi + \left(n-k \right) \log (n-k) -k \log k - \left(n-2k \right) \log (n-2k)
\nonumber\\ & = & -n \log\varphi + \left(n-(\mu_{n+1} + x \sigma_{n+1}) \right) \log (n-(\mu_{n+1} + x \sigma_{n+1}))
\nonumber\\ & & -\left(\mu_{n+1} + x \sigma_{n+1} \right) \log (\mu_{n+1} + x \sigma_{n+1})
\nonumber\\ & & - \left(n-2(\mu_{n+1} + x \sigma_{n+1}) \right) \log (n-2(\mu_{n+1} + x \sigma_{n+1}))
\nonumber\\ & = & -n \log\varphi \nonumber\\ & & + \left(n-(\mu_{n+1} + x \sigma_{n+1}) \right) \left( \log(n-\mu_{n+1}) + \log \left(1 -  \frac{x\sigma_{n+1}}{n-\mu_{n+1}} \right) \right) \nonumber\\ & & -\left(\mu_{n+1} + x \sigma_{n+1} \right) \left( \log(\mu_{n+1}) + \log \left(1 + \frac{x\sigma_{n+1}}{\mu_{n+1}}\right) \right) \nonumber\\ & & - \left(n-2(\mu_{n+1} + x \sigma_{n+1}) \right) \left( \log (n-2\mu_{n+1}) + \log \left(1 - \frac{2x\sigma_{n+1}}{n-2\mu_{n+1}}\right) \right)
\nonumber\\ & = & -n \log\varphi \nonumber\\ & & + \left(n-(\mu_{n+1} + x \sigma_{n+1}) \right) \left(\log \left(\frac{n}{\mu_{n+1}}-1 \right) + \log \left(1 -  \frac{x\sigma_{n+1}}{n-\mu_{n+1}} \right) \right) \nonumber\\ & & -\left(\mu_{n+1} + x \sigma_{n+1} \right) \log \left(1 + \frac{x\sigma_{n+1}}{\mu_{n+1}}\right) \nonumber\\ & & - \left(n-2(\mu_{n+1} + x \sigma_{n+1}) \right) \left(\log \left(\frac{n}{\mu_{n+1}}-2\right) + \log \left(1 - \frac{2x\sigma_{n+1}}{n-2\mu_{n+1}}\right) \right).
\eea

We may simplify the expression above by replacing any $\mu_{n+1}$ inside a logarithm with $Cn$. This is because the resulting Taylor expansion of the logarithms will yield a term of size $1/n^2$. The largest this can be multiplied by is $n$, which leads to an error at most $O(1/n)$. We exponentiate to get $S_n(k)$, which leads to an error factor of size $1-\exp(O(1/n))$, which as $n\to\infty$ is just of size $1/n$. We have $$\log\left(\frac{n}{\mu_{n+1}}-1\right) \ = \ \log\frac{n-\mu_{n+1}}{\mu_{n+1}} \ = \ \log \frac{1-C}{C} + O\left(\frac1{n^2}\right) \ = \ 2\log \phi + O\left(\frac1{n^2}\right),$$ as $(1-C)/C = \phi+1 = \phi^2$. Similarly $$\log\left(\frac{n}{\mu_{n+1}}-2\right) \ = \ \log\frac{n-2\mu_{n+1}}{\mu_{n+1}} \ = \ \log \frac{1-2C}{C} + O\left(\frac1{n^2}\right) \ = \ \log \phi + O\left(\frac1{n^2}\right).$$  Substituting these into \eqref{eq:bigmessfirst} yields \bea & & \log S_n(k) + O\left(\frac1{n}\right) \nonumber\\ & = & -n \log\varphi + \left(n-(\mu_{n+1} + x \sigma_{n+1}) \right) \left(2\log \varphi + \log \left(1 -  \frac{x\sigma_{n+1}}{(1-C)n} \right) \right) \nonumber\\ & & -\left(\mu_{n+1} + x \sigma_{n+1} \right) \log \left(1 + \frac{x\sigma_{n+1}}{Cn}\right) \nonumber\\ & & - \left(n-2(\mu_{n+1} + x \sigma_{n+1}) \right) \left(\log \varphi + \log \left(1 - \frac{2x\sigma_{n+1}}{(1-2C)n}\right) \right).
\eea We note that the coefficient of the $\log\varphi$ term is zero, so \bea  \log S_n(k) + O\left(\frac1{n}\right) & = &\left(n-(\mu_{n+1} + x \sigma_{n+1}) \right)  \log \left(1 -  \frac{x\sigma_{n+1}}{(1-C)n} \right) \nonumber\\ & & -\left(\mu_{n+1} + x \sigma_{n+1} \right) \log \left(1 + \frac{x\sigma_{n+1}}{Cn}\right) \nonumber\\ & & - \left(n-2(\mu_{n+1} + x \sigma_{n+1}) \right)\log \left(1 - \frac{2x\sigma_{n+1}}{(1-2C)n}\right) \nonumber\\  & = &\left((1-C)n - x \sigma_{n+1}) \right)  \log \left(1 -  \frac{x\sigma_{n+1}}{(1-C)n} \right) \nonumber\\ & & -\left(Cn + x \sigma_{n+1} \right) \log \left(1 + \frac{x\sigma_{n+1}}{Cn}\right) \nonumber\\ & & - \left((1-2C)n - 2x \sigma_{n+1}) \right)\log \left(1 - \frac{2x\sigma_{n+1}}{(1-2C)n}\right).
\eea

Let $u = x\sigma_{n+1}/n$. Note $u = O(n^{-1/2+\gep})$, and thus when we expand the logarithms above, we never need to keep more than the $u^2$ terms, as anything further will be small, even upon multiplication by $n$. Expanding gives \bea & &  \log S_n(k) + O\left(n^{-1/2+3\gep}\right) \nonumber\\ &=& (1-C)n \left(-\frac{u}{1-C} - \frac{u^2}{2(1-C)^2}\right) - un \left(-\frac{u}{1-C}\right) \nonumber\\ & & -Cn \left(\frac{u}{C} - \frac{u^2}{2C^2}\right) - un \left(\frac{u}{C}\right) \nonumber\\ & & -(1-2C)n \left(-\frac{2u}{1-2C}-\frac{4u^2}{2(1-2C)^2}\right) + 2un \left(-\frac{2u}{1-2C}\right). \eea Note all the $un$ terms cancel, and all that survives are the $u^2n$ terms. Thus \bea & &  \log S_n(k) + O\left(n^{-1/2+3\gep}\right) \nonumber\\ &=& \left[-\frac1{2(1-C)} + \frac1{1-C} + \frac1{2C} - \frac1{C} + \frac2{1-2C} - \frac4{1-2C}\right] u^2n \nonumber\\ &=&  -\frac{u^2 n}{2 C - 6 C^2 + 4 C^3}.  \eea As $C = \frac1{\varphi+2}$ and $u = x\sigma_{n+1}/n$ with $\sigma_{n+1}^2 = \frac{\varphi n}{5(\varphi+2)} + O(1)$, simplifying the above yields \be \log S_n(k) + O\left(n^{-1/2+3\gep}\right) \ = \ -\frac{x^2}{2}, \ee and thus exponentiating gives \be S_n(k) \ = \ \exp(-x^2/2) \exp\left(n^{-1/2+3\gep}\right).  \ee \end{proof}

\subsection{Proof of Theorem \ref{thm:main}}

Using the results from the previous subsections (Lemmas \ref{lem:thmmainnnk} and \ref{lem:thmmainsnk}, and the change of variable argument on why a factor of $\sigma_{n+1}$ enters), we can now prove the convergence to a Gaussian.

\begin{proof}[Proof of Theorem \ref{thm:main}] The only item left to prove in Theorem \ref{thm:main} is the convergence to the Gaussian. We have \bea p_{n+1}(k)dk  & \ = \ & p_{n+1}\left(\mu_{n+1} + \sigma_{n+1}x\right) \sigma_{n+1} dx \nonumber\\ &=& N_n(k) S_n(k) \sigma \left(1 + O(n^{-1/2+3\gep})\right) dx \nonumber\\ &=&  \frac{1}{\sqrt{2 \pi \sigma_{n+1}^2}} \cdot e^{-x^2/2} \cdot \sigma_{n+1} \left(1 + O(n^{-1/2+3\gep})\right)dx \nonumber\\ &=& \frac1{\sqrt{2\pi}}\ e^{-x^2/2} \left(1 + O(n^{-1/2+3\gep})\right)dx,\eea which clearly converges to the standard normal as $n\to\infty$.\end{proof}


\section{Far-difference representations}

We now consider the problem of the far-difference representations. We are studying the number of positive and negative Fibonacci summands in the decomposition of integers in $[S_n, S_{n+1})$, where $S_n = \sum_{0<n-4i\le n} F_{n-4i}$. Using combinatorial arguments as in Lemma \ref{lem:cookieproblem} (which involve solving a variant of this Diophantine problem inside a variant of this Diophantine problem),
we can come up with a formula for the joint density function $p_n(k,\ell)$ for the number of integers in $[S_n, S_{n+1})$ with exactly $k$ positive Fibonacci summands and exactly $\ell$ negative Fibonacci summands. After a lot of algebra, we obtain the formula:
\begin{eqnarray}
p_n(k,\ell) & \ = \ & \sum_{r=0}^k \ncr{k-1}{k-r} \Bigg[ \ncr{\ell-1}{r-2}\ncr{n-3(k+\ell)+2r-3}{k+\ell}\nonumber\\
& & \ \ + \ \ncr{\ell-1}{r-1}\ncr{n-3(k+\ell)+2r-2}{k+\ell}+
\ncr{\ell-1}{r-1}\ncr{n-3(k+\ell)+2r-1}{k+\ell}\nonumber\\
& & \ \ + \ \ncr{\ell-1}{r}\ncr{n-3(k+\ell)+2r}{k+\ell} \Bigg].
\end{eqnarray}

If we could get good asymptotics for this formula, we could calculate the limiting density directly, but this does not seem feasible.  With enough patience, this formula can be used to calculate any particular joint moment, but this becomes cumbersome very general. Unlike our expression for the standard Fibonacci case, here we have sums and products of binomial coefficients that would need to be simplified before we can fruitfully apply Stirling's formula. As the generating function technique is able to handle this problem, we invite the reader to see \cite{MW} for a complete analysis of this problem.


\section{Conclusion and Future Research}

Our combinatorial viewpoint has allowed us to extend previous work and obtain Gaussian behavior for the number of summands for a large class of recurrence relations. This is just the first of many questions one can ask. Others, which we hope to return to at a later date, include:

\begin{enumerate}

\item Lekkerkerker's theorem, and the Gaussian extension, are for the behavior in intervals $[F_n, F_{n+1})$.
Do the limits exist if we considere other intervals, say
$[F_n+g_1(F_n), F_n + g_2(F_n))$ for some functions $g_1$ and $g_2$? If yes, what must be
true about the growth rates of $g_1$ and $g_2$?

\item For the generalized recurrence relations, what happens if instead of looking at $\sum_{i=1}^n a_i$ we study $\sum_{i=1}^n \min(1,a_i)$? In other words, we only care about how many distinct $H_i$'s occur in the decomposition.

\item What can we say about the distribution of the largest gap between summands in the Zeckendorf decomposition? Appropriately normalized, how does the distribution of gaps between the summands behave?

\end{enumerate}


\appendix

\section{Proofs of Combinatorial Identities}\label{sec:appcombid}

We collect the proofs of the various needed combinatorial identities.

\begin{lem}\label{lem:sumupdownfib} Let $F_m$ denote the
$m$\textsuperscript{th} Fibonacci number, with $F_1 = 1$, $F_2 = 2$,
$F_3=3$, $F_4 = 5$ and so on. Then \be \sum_{k=0}^{\lfloor
\frac{n-1}2\rfloor} \ncr{n-1-k}{k} \ = \ F_{n-1}. \ee \end{lem}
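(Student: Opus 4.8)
The plan is to prove the identity $\sum_{k=0}^{\lfloor (n-1)/2\rfloor}\ncr{n-1-k}{k}=F_{n-1}$ by induction on $n$, using the Pascal recurrence for binomial coefficients together with the Fibonacci recurrence (in the paper's normalization $F_1=1$, $F_2=2$, $F_m=F_{m-1}+F_{m-2}$). First I would verify the base cases directly: for $n=2$ the sum is $\ncr{1}{0}=1=F_1$, and for $n=3$ the sum is $\ncr{2}{0}+\ncr{1}{1}=1+1=2=F_2$. These two base cases are needed because the inductive step reduces the index by $2$.

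For the inductive step, assume the identity holds for $n-1$ and $n-2$; I want to show it for $n$. Write $S(n)=\sum_{k\ge 0}\ncr{n-1-k}{k}$, where the sum is understood to run over all $k$ for which the binomial coefficient is nonzero (i.e. $0\le k\le \lfloor (n-1)/2\rfloor$), which lets me avoid fussing over the exact upper limit. Apply Pascal's rule $\ncr{n-1-k}{k}=\ncr{n-2-k}{k}+\ncr{n-2-k}{k-1}$ to each term. The first pieces sum to $S(n-1)=\sum_{k\ge0}\ncr{(n-1)-1-k}{k}$. For the second pieces, reindex with $j=k-1$: $\sum_{k\ge1}\ncr{n-2-k}{k-1}=\sum_{j\ge0}\ncr{n-3-j}{j}=S(n-2)$. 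Hence $S(n)=S(n-1)+S(n-2)$, and by the inductive hypothesis $S(n)=F_{n-2}+F_{n-3}=F_{n-1}$, completing the induction.

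The main (and essentially only) delicate point is bookkeeping at the edges of the summation range: one must check that Pascal's identity and the reindexing do not introduce or drop boundary terms — for instance that $\ncr{n-2-k}{k}$ vanishes precisely when $k$ exceeds its allowed range, and that the $k=0$ term contributes nothing to the reindexed sum. Adopting the convention that $\ncr{a}{b}=0$ whenever $b<0$ or $b>a$ makes all of these automatic, so I would state that convention explicitly at the start and let the manipulations go through formally. No deeper idea is required; this is the standard "shallow diagonal sums of Pascal's triangle are Fibonacci numbers" argument, adjusted only for the paper's index shift.
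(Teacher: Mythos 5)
Your proof is correct and follows essentially the same route as the paper's: both argue by induction, apply Pascal's rule $\ncr{m}{\ell}+\ncr{m}{\ell+1}=\ncr{m+1}{\ell+1}$ term by term with the convention that out-of-range binomial coefficients vanish, and reindex the shifted sum to obtain $S(n)=S(n-1)+S(n-2)$, whence the Fibonacci recurrence finishes the argument. The only cosmetic difference is that the paper phrases the step as going from $n$ to $n+1$ and checks small cases by brute force, while you use two explicit base cases $n=2,3$; your attention to the boundary-term bookkeeping matches the paper's handling exactly.
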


\begin{proof} We proceed by induction. The base case is trivially
verified, and by brute force checking we may assume $n \ge 4$. We
assume our claim holds for $n$ and must show that it holds for $n+1$.
Note that we may extend the sum to $n-1$, as $\ncr{n-1-k}{k} = 0$
whenever $k > \lfloor \frac{n-1}2\rfloor$. Using the standard
identity that \be \ncr{m}{\ell} + \ncr{m}{\ell+1} \ = \
\ncr{m+1}{\ell+1},\ee and the convention that $\ncr{m}{\ell} = 0$ if
$\ell$ is a negative integer, we find \bea \sum_{k=0}^{n}
\ncr{n-k}{k} & \ = \ & \sum_{k=0}^n \left[\ncr{n-1-k}{k-1} +
\ncr{n-1-k}{k}\right] \nonumber\\ &=& \sum_{k=1}^n \ncr{n-1-k}{k-1} +
\sum_{k=0}^n \ncr{n-1-k}{k} \nonumber\\ &=& \sum_{k=1}^n
\ncr{n-2-(k-1)}{k-1} + \sum_{k=0}^n \ncr{n-1-k}{k} \nonumber\\ &=&
F_{n-2} + F_{n-1} \eea by the inductive assumption; noting
$F_{n-2}+F_{n-1} = F_n$ completes the proof. \end{proof}

\begin{proof}[Proof of Lemma \ref{lem:recurrencecen}]\label{lem:proofrec} The lemma follows from straightforward algebra. We have \bea
\ce(n) & \ = \ & \sum_{k=0}^{\lfloor \frac{n-1}2\rfloor} k
\ncr{n-1-k}{k} \nonumber\\ &=& \sum_{k=1}^{\lfloor
\frac{n-1}2\rfloor} k \frac{(n-1-k)!}{k!(n-1-2k)!}
\nonumber\\ &=& \sum_{k=1}^{\lfloor \frac{n-1}2\rfloor} (n-1-k)
\frac{(n-2-k)!}{(k-1)!(n-1-2k)!} \nonumber\\ &=& \sum_{k=1}^{\lfloor
\frac{n-1}2\rfloor} (n-2-(k-1))
\frac{(n-3-(k-1)!}{(k-1)!(n-3-2(k-1))!} \nonumber\\ &=&
\sum_{\ell=0}^{\lfloor \frac{n-3}2\rfloor} (n-2-\ell)
\ncr{n-3-\ell}{\ell} \nonumber\\ &=& (n-2) \sum_{\ell=0}^{\lfloor
\frac{n-3}2\rfloor} \ncr{n-3-\ell}{\ell} - \sum_{\ell=0}^{\lfloor
\frac{n-3}2\rfloor} \ell \ncr{n-3-\ell}{\ell} \nonumber\\ &=& (n-2)
F_{n-3} - \ce(n-2), \eea which establishes the lemma. Note that we
used the binomial identity again (Lemma \ref{lem:sumupdownfib}) to
replace the sum of binomial coefficients with a Fibonacci number.
\end{proof}

\begin{proof}[Proof of Lemma \ref{lem:formula}]\label{lem:proofformula} Consider \bea & &
\sum_{\ell = 0}^{\lfloor \frac{n-3}{2}\rfloor} (-1)^\ell
\left(\ce(n-2\ell) + \ce(n-2(\ell+1))\right) \ = \
\sum_{\ell=0}^{\lfloor \frac{n-3}{2}\rfloor} (-1)^\ell (n-2-2\ell)
F_{n-3-2\ell} \nonumber\\ & & \ \ \ \ \ \ \ \ \ \ \ \ \ \ \ \ = \
\sum_{\ell=0}^{\lfloor \frac{n-3}{2}\rfloor} (-1)^\ell (n-3-2\ell)
F_{n-3-2\ell} + \sum_{\ell=0}^{\lfloor \frac{n-3}{2}\rfloor}
(-1)^\ell (2\ell) F_{n-3-2\ell} \nonumber\\  & & \ \ \ \ \ \ \ \ \ \
\ \ \ \ \ \ = \ \sum_{\ell=0}^{\lfloor \frac{n-3}{2}\rfloor}
(-1)^\ell (n-3-2\ell) F_{n-3-2\ell} + O(F_{n-2}); \eea while we could
evaluate the last sum exactly, trivially estimating it suffices to
obtain the main term (as we have a sum of every other Fibonacci
number, the sum is at most the next Fibonacci number after the
largest one in our sum).

We now use Binet's formula (see \eqref{eq:binetformula}) to convert the sum into a geometric
series. Letting $\varphi = \frac{1+\sqrt{5}}2$ be the golden mean, we
have \be F_n \ = \ \frac{\varphi}{\sqrt{5}} \cdot \varphi^n -
\frac{1-\varphi}{\sqrt{5}} \cdot (1-\varphi)^n \ee (our constants are
because our counting has $F_1 = 1$, $F_2 =2$ and so on). As
$|1-\varphi| < 1$, the error from dropping the $(1-\varphi)^n$ term
is $O(\sum_{\ell\le n} n) = O(n^2) = o(F_{n-2})$, and may thus safely
be absorbed in our error term. We thus find \bea \ce(n) & \ = \ &
\frac{\varphi}{\sqrt{5}} \sum_{\ell=0}^{\lfloor \frac{n-3}{2}\rfloor}
(n-3-2\ell) (-1)^\ell \varphi^{n-3-2\ell} + O(F_{n-2}) \nonumber\\
&=& \frac{\varphi^{n-2}}{\sqrt{5}} \left[(n-3) \sum_{\ell=0}^{\lfloor
\frac{n-3}{2}\rfloor} (-\varphi^{-2})^{\ell} -2
\sum_{\ell=0}^{\lfloor \frac{n-3}{2}\rfloor} \ell
(-\varphi^{-2})^{\ell} \right] + O(F_{n-2}).\eea

We use the geometric series formula to evaluate the first term. We
drop the upper boundary term of $(-\varphi^{-1})^{\lfloor
\frac{n-3}2\rfloor}$, as this term is negligible since $\varphi > 1$.
We may also move the 3 from the $n-3$ into the error term, and are
left with \bea \ce(n) & \ = \ & \frac{\varphi^{n-2}}{\sqrt{5}}
\left[\frac{n}{1+\varphi^{-2}} -2 \sum_{\ell=0}^{\lfloor
\frac{n-3}{2}\rfloor} \ell (-\varphi^{-2})^{\ell} \right] +
O(F_{n-2}) \nonumber\\ & \ = \ & \frac{\varphi^{n-2}}{\sqrt{5}}
\left[\frac{n}{1+\varphi^{-2}} -2
S\left(\Big\lfloor\frac{n-3}{2}\Big\rfloor, -\varphi^{-2}\right)
\right] + O(F_{n-2}), \eea
where \be \mathcal{S}(m,x) \ = \ \sum_{j=0}^m j x^j. \ee There is a
simple formula for $\mathcal{S}(m,x)$. As \be \sum_{j=0}^m x^j \ = \
\frac{x^{m+1}-1}{x-1},  \ee  applying the operator $x \frac{d}{dx}$
gives \be \mathcal{S}(m,x) \ = \ \sum_{j=0}^m j x^j \ = \ x
\frac{(m+1)x^m(x-1)-(x^{m+1}-1)}{(x-1)^2} \ = \
\frac{mx^{m+2}-(m+1)x^{m+1}+x}{(x-1)^2}. \ee Taking $x =
-\varphi^{-2}$, we see that the contribution from this piece may
safely be absorbed into the error term $O(F_{n-2})$, leaving us with
\be \ce(n) \ = \ \frac{n \varphi^{n-2}}{\sqrt{5}(1+\varphi^{-2})} +
O(F_{n-2})\ = \ \frac{n \varphi^n}{\sqrt{5}(\varphi^2+1)} +
O(F_{n-2}). \ee Noting that for large $n$ we have $F_{n-1} =
\frac{\varphi^n}{\sqrt{5}}  + O(1)$, we finally obtain \be \ce(n) \ =
\ \frac{n F_{n-1}}{\varphi^2+1} + O(F_{n-2}).\ee A more careful analysis is possible; such a computation leads to the exact form for the mean given in \eqref{eq:meann}.
\end{proof}


\ \\

\end{document}